\newtheorem{theorem}{Theorem}[section]
\newtheorem{corollary}[theorem]{Corollary}
\newtheorem{lemma}[theorem]{Lemma}
\newtheorem*{definition*}{Definition}
\def\F{\mathbb{F}}
\def\U{\mathcal{U}}
\def\V{\mathcal{V}}
\def\W{\mathcal{W}}
\def\RR{\mathcal{R}}
\def\S{\mathcal{S}}
\def\T{\mathcal{T}}
\newcommand{\I}{\mathcal{I}}
\begin{document}
\title{Exponential sum estimates over prime fields}
\author{
Doowon Koh\thanks{Department of Mathematics, Chungbuk National University. Supported by the National Research Foundation of Korea (NRF) grant funded by the Korea government (MIST) (No. NRF-2018R1D1A1B07044469). Email: {\tt koh131@chungbuk.ac.kr}}
\and 
Mozhgan Mirzaei \thanks{Department of Mathematics, University of California, San Diego. Partially supported by NSF grant DMS-1800746. Email: {\tt momirzae@ucsd.edu}}
\and 
Thang Pham\thanks{Department of Mathematics, University of California, San Diego. Supported by Swiss National Science Foundation grant P2ELP2175050. Email: {\tt v9pham@ucsd.edu}}
  \and
Chun-Yen Shen \thanks{Department of Mathematics, National Taiwan University. Partially supported by MOST, grant 104-2628-M-002-015-MY4. Email: {\tt cyshen@math.ntu.edu.tw}}
  }

\date{}
\maketitle
\date{}
\maketitle
\begin{abstract}
In this paper, we prove some extensions of recent results given by Shkredov and Shparlinski on multiple character sums for some general families of polynomials over prime fields. The energies of polynomials in two and three variables are our main ingredients. 
\end{abstract}

\section{Introduction}
Let $\mathbb{F}_p$ be a prime field, and $\chi$ be a non-trivial multiplicative character of $\mathbb{F}_p^*.$ Let $\delta>0$ be a real number. The Paley graph conjecture states that for any two sets $A, B\subset \mathbb{F}_p$ with $|A|, |B|>p^\delta,$ there exists $\gamma=\gamma(\delta)$ such that the following estimate holds: 
\begin{equation}\label{eq1}\left\vert \sum_{a\in A, b\in B}\chi(a+b)\right\vert<p^{-\gamma}|A||B|,\end{equation}
for any sufficiently large prime $p$ and any non-trivial character $\chi.$

If $|A|>p^{\frac{1}{2}+\delta}$ and $|B|>p^\delta,$ the conjecture has been confirmed by Karatsuba in \cite{k1, k2, k3}. In other ranges, the conjecture remains widely open, even in the balance case $|A|=|B|\sim p^{1/2}.$

In \cite{chang}, it is shown that if we have a restricted condition on the size of the sumset $B+B,$ then the inequality (\ref{eq1}) is true. The precise statement is as follows. 
\bigskip
\begin{theorem}[\cite{chang}]
Let $\delta$ and $K$ be positive numbers. Let $A, B$ be sets in $\mathbb{F}_p^*$ with $p>p(\delta, K)$ large enough and $\chi$ a non-trivial multiplicative character of $\mathbb{F}_p^*.$ Suppose that 
\[|A|>p^{\frac{4}{9}+\delta}, \]
\[|B|>p^{\frac{4}{9}+\delta},\]
\[|B+B|<K|B|.\]
Then there exists $\gamma=\gamma(\delta, K)>0$ such that 
\[\left\vert \sum_{a\in A, b\in B}\chi(a+b)\right\vert<p^{-\gamma}|A||B|.\]
\end{theorem}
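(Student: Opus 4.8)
The plan is to bound the bilinear sum $S=\sum_{a\in A,\,b\in B}\chi(a+b)$ by an amplification-and-completion argument, using the small-doubling hypothesis only at the end to control the combinatorial (energy) terms that appear. First I would apply Cauchy--Schwarz in $a$, inserting the restriction $a\in A$ as a weight, to pass to a second moment:
\[
|S|^{2}\le |A|\sum_{a\in A}\Big|\sum_{b\in B}\chi(a+b)\Big|^{2}
=|A|\sum_{b,b'\in B}\sum_{a\in A}\chi\Big(\frac{a+b}{a+b'}\Big)=:|A|\,W.
\]
The diagonal $b=b'$ contributes exactly $|A||B|$ to $W$, so the real task is to show the off-diagonal part is genuinely smaller. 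Here I would complete the inner sum to all of $\Fp$ by writing $1_A(a)=p^{-1}\sum_{t}\widehat{1_A}(t)\,e_p(-at)$ with $e_p(x)=e^{2\pi i x/p}$. The frequency $t=0$ produces only a harmless main term, since each off-diagonal pair yields the complete sum $\sum_{c\neq0}\chi(1+(b-b')/c)=-1$ scaled by $|A|/p$, while the frequencies $t\neq0$ produce mixed multiplicative--additive complete character sums. Grouping these and applying Cauchy--Schwarz against Parseval, $\sum_t|\widehat{1_A}(t)|^2=p|A|$, reduces the off-diagonal part to an expression of the shape
\[
\frac{|A|^{1/2}}{p^{1/2}}\Big(\sum_{a\in\Fp}\Big|\sum_{b\neq b'}\chi\big(1+\tfrac{b-b'}{a+b'}\big)\Big|^{2}\Big)^{1/2}.
\]

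Next I would expand the square and sum out $a$, converting the inner quantity into a sum over quadruples $(b_1,b_1',b_2,b_2')\in B^4$ of the complete sums $\sum_{a}\chi\big(\frac{(a+b_1)(a+b_2')}{(a+b_1')(a+b_2)}\big)$. By Weil's bound each such sum is $O(\sqrt p)$ unless the rational function degenerates to a constant (equivalently a perfect power), which for a fixed nontrivial $\chi$ forces the two difference-pairs to coincide. The degenerate quadruples are the diagonal ones; they yield the main term of size $\asymp p\,|B|^2$, whereas the non-degenerate quadruples contribute $O(\sqrt p\,N)$, where $N$ counts the non-degenerate solutions. This is precisely where the \emph{energy of a polynomial} enters: one must show that the relevant configurations are governed not by the trivial count $|B|^4$ but by a much smaller number of solutions of a fixed polynomial equation in the $b_i$, and this is the only place the hypothesis $|B+B|<K|B|$ is needed. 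By the Plünnecke--Ruzsa inequalities the small-doubling assumption controls every iterated sumset and difference, $|kB-\ell B|\ll_{K}|B|$, so the differences $b_i-b_i'$ range over a set of size $\ll_K|B|$ and the additive energy is essentially maximal, $E^{+}(B)\asymp_K|B|^3$. Feeding this structure into the quadruple count — grouping the quadruples by their differences and extracting the extra cancellation this concentration forces — is what beats the off-diagonal term down.

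Finally I would assemble the pieces and optimize. After undoing the amplification, the main term is $\asymp|A|^2|B|^2$ and each error term carries a factor of the form $p^{1/2}/(|A|^{1/2}|B|)$ or $p^{1/4}/|A|^{1/2}$ (up to powers of $K$); the hypotheses $|A|,|B|>p^{4/9+\delta}$ are exactly what make every such factor gain a saving $p^{-c\delta}$, yielding $|S|<p^{-\gamma}|A||B|$ with $\gamma=\gamma(\delta,K)>0$. The main obstacle is the control of $N$: in the naive count the genuinely non-degenerate Weil contribution is of size $\sqrt p\,|B|^4$, which forces the classical Karatsuba threshold $|A|>p^{1/2}$, and the whole gain to $4/9$ rests on suppressing this term using only the additive structure of $B$. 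I expect that reaching $4/9$ rather than merely $1/2$ requires iterating the completion once more, at which stage the analogous \emph{energy of a polynomial in three variables} (a count over $6$-tuples from $B$) appears and must be estimated by the same Plünnecke--Ruzsa and additive-energy machinery; carrying out that three-variable energy bound sharply is, I believe, the crux of the whole argument.
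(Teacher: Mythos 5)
The paper does not prove this statement: it is quoted verbatim from Chang \cite{chang} as background, so there is no in-paper proof to compare against and your argument has to stand on its own. It does not, and you essentially concede this yourself. The skeleton you describe (Cauchy--Schwarz in $a$, completion, expansion into quadruples of complete sums, Weil's bound on the non-degenerate ones) is Karatsuba's classical amplification, and by your own accounting it produces error factors such as $p^{1/4}/|A|^{1/2}$, which become a power saving only when $|A|>p^{1/2+\delta}$. Everything below the $1/2$ threshold --- that is, the entire content of the exponent $4/9$ --- is deferred to the step where you invoke $|B+B|<K|B|$, and that step as written fails. Pl\"unnecke--Ruzsa and the observation that $E^{+}(B)\gg_K |B|^{3}$ give you \emph{more} structure on the differences $b_i-b_i'$, but they do not reduce the number of non-degenerate quadruples feeding into the Weil term: there are still $\asymp |B|^{4}$ of them, each contributing $O(\sqrt p)$, and ``the extra cancellation this concentration forces'' is asserted, not derived. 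Maximal additive energy is automatic for any set of small doubling and is not a mechanism for saving anything here.

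The actual engine of Chang's proof is multiplicative, not additive: one uses Freiman's theorem to embed $B$ in a generalized arithmetic progression of rank $O_K(1)$ and size $O_K(|B|)$, and then a sum--product-type estimate for such progressions to show that the \emph{multiplicative} energy --- the number of solutions of $b_1b_2=b_3b_4$ with the $b_i$ drawn from suitable iterated sum/difference sets of $B$ --- is $\ll |B|^{2+\epsilon}$. It is this smallness of multiplicative energy, inserted into the Karatsuba/Weil machinery, that lowers the admissible range to $p^{4/9+\delta}$. None of this appears in your proposal; your closing sentence, identifying the sharp energy bound as ``the crux of the whole argument,'' correctly locates exactly what is missing. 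As it stands, the proposal proves only the weaker Karatsuba-range statement, not the theorem.
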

\bigskip
In a recent work, Shkredov and Volostnov \cite{vol} improved this theorem in the case $A=B$ using a Croot-Sisask lemma on almost periodicity of convolutions of characteristic functions of sets \cite{croot}. For the sake of completeness, we will state their result in a general form as follows. 
\bigskip
\begin{theorem} [\cite{vol}]
Let $\delta,$ $K$ and $L$ be positive numbers. Let $A, B$ be sets in $\mathbb{F}_p^*$ with $p>p(\delta, K, L)$ large enough and $\chi$ a non-trivial multiplicative character of $\mathbb{F}_p^*.$ Suppose that 
\[|A|>p^{\frac{12}{31}+\delta}, \]
\[|B|>p^{\frac{12}{31}+\delta},\]
\[|A+A|<K|A|,\]
\[|A+B|<L|B|.\]
Then we have
\[\left\vert \sum_{a\in A, b\in B}\chi(a+b)\right\vert<\sqrt{\frac{L\log 2K}{\delta \log p}}|A||B|.\]
\end{theorem}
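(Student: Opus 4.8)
The plan is to follow the Shkredov--Volostnov strategy: rewrite the character sum in terms of the additive convolution $N=1_A*1_B$, use the Croot--Sisask almost-periodicity lemma to replace $S$ by its average over a large set of almost-periods, and then reduce matters to a complete character sum of a M\"obius transformation that can be handled by Weil's bound together with the additive energy of $A$ and $B$.

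First I would write $S=\sum_{a\in A,b\in B}\chi(a+b)=\sum_{x}N(x)\chi(x)$, where $N(x)=(1_A*1_B)(x)$ is supported on $A+B$ and satisfies $\sum_x N(x)=|A||B|$. Since $|A+A|<K|A|$, the Croot--Sisask lemma produces, for any $\epsilon\in(0,1)$ and an integer parameter $q$, a large set $T$ of $\epsilon$-almost-periods of $N$ with $|T|\ge |A|(2K)^{-Cq/\epsilon^2}$, in the sense that $N(\cdot+t)-N$ is small (in the relevant $L^q$ norm) for every $t\in T$. Averaging the identity $S=\sum_x N(x)\chi(x)$ over the translates $x\mapsto x+t$, $t\in T$, and discarding the resulting error by almost-periodicity, I obtain
\[
|S|\le \epsilon|A||B|+\frac{1}{|T|}\Big|\sum_{y}N(y)\sum_{t\in T}\chi(y-t)\Big|.
\]
(The hypothesis $|A+B|<L|B|$ on the support of $N$ enters just below, when the $L^2$-form of almost-periodicity is converted into control of this character sum; this is the origin of the factor $L$.)

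Next I would apply the Cauchy--Schwarz inequality with the weight $N\ge0$ to the inner sum, using $\overline{\chi(z)}=\chi(z^{-1})$, to get
\[
\Big|\sum_y N(y)\sum_{t\in T}\chi(y-t)\Big|\le (|A||B|)^{1/2}\Big(\sum_{t,t'\in T}\sum_y N(y)\,\chi\!\Big(\tfrac{y-t}{y-t'}\Big)\Big)^{1/2}.
\]
The diagonal terms $t=t'$ contribute at most $|T|\,|A||B|$. The heart of the matter is the off-diagonal sum $\sum_{t\ne t'}\Sigma(t,t')$ with $\Sigma(t,t')=\sum_{a\in A,b\in B}\chi\big((a+b-t)(a+b-t')^{-1}\big)$. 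Here I would complete the sum in the variable $u=a+b$: the full sum $\sum_{u\in\mathbb{F}_p}\chi((u-t)(u-t')^{-1})$ is the complete character sum of a M\"obius transformation and equals a constant of size $O(1)$, so the ``main'' part of $N$ contributes negligibly, and the remaining fluctuation is estimated by Weil's bound together with the additive energy of $A$ and $B$ (the point where energies of polynomials in two and three variables enter). Ensuring that this off-diagonal term is dominated by the diagonal $|T|\,|A||B|$ is the main obstacle, and it is precisely this estimate that forces the thresholds $|A|,|B|>p^{12/31+\delta}$.

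Finally I would optimise $\epsilon$. Substituting the diagonal-dominated bound gives $|S|\lesssim \epsilon|A||B|+|A||B|\,|T|^{-1/2}$, and inserting $|T|\ge |A|(2K)^{-Cq/\epsilon^2}$ together with $|A|>p^{12/31+\delta}$ shows the second term is controlled once $\epsilon^2\gtrsim \frac{L\log 2K}{\delta\log p}$; the logarithm of $K$ appears because $K$ sits in the exponent of the Croot--Sisask size bound, while $L$ appears linearly through the support estimate above. Choosing $\epsilon$ of this order yields $|S|\le \sqrt{\frac{L\log 2K}{\delta\log p}}\,|A||B|$, as claimed. I expect the off-diagonal complete-sum estimate---in particular verifying that the Weil/energy error beats the diagonal at the exponent $12/31$---to be the crux; the remainder is the Croot--Sisask machinery and careful bookkeeping of the parameters $\epsilon$, $q$, $K$ and $L$.
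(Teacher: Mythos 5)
This statement is quoted in the paper from Shkredov and Volostnov \cite{vol} and is not proved there at all, so there is no in-paper argument to compare against; your sketch has to stand on its own. In outline you have correctly identified the Shkredov--Volostnov strategy: write $S=\sum_x N(x)\chi(x)$ with $N=1_A*1_B$, use Croot--Sisask with the hypothesis $|A+A|<K|A|$ to produce a set $T$ of almost-periods of size at least $|A|(2K)^{-Cq/\epsilon^2}$, average over $T$, apply Cauchy--Schwarz, separate the diagonal, and optimise $\epsilon$ to make the Croot--Sisask loss match the final bound $\sqrt{L\log 2K/(\delta\log p)}$. That skeleton is the right one.

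However, there is a genuine gap, and you name it yourself: the off-diagonal estimate is asserted, not proved, and it is the entire mathematical content of the theorem. After your Cauchy--Schwarz step you face $\sum_{t\ne t'}\sum_y N(y)\chi\bigl((y-t)(y-t')^{-1}\bigr)$, and you cannot simply ``complete the sum in $u=a+b$'' because the weight $N(y)$ is not constant; Weil's bound applies to the complete sum $\sum_{u\in\F_p}\chi((u-t)(u-t')^{-1})$, and passing from the weighted incomplete sum to that complete sum requires a further Cauchy--Schwarz (or higher-moment/H\"older) step in which $\|N\|_2^2$, i.e.\ the additive energy $E(A,B)$, and the support bound $|A+B|<L|B|$ enter quantitatively. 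It is exactly this bookkeeping that produces both the factor $L$ in the final estimate and the threshold exponent $\tfrac{12}{31}$; neither is derived in your sketch, and the exponent in particular cannot be recovered from the steps you have written down (a single Cauchy--Schwarz plus Weil would give a different, weaker threshold). Similarly, the claim that the almost-periodicity error is at most $\epsilon|A||B|$ needs the conversion from the $L^q$ conclusion of Croot--Sisask to an $L^1$ bound on the support $A+B$, which is another place where $L$ and the parameter $q$ interact and must be tracked. Until the off-diagonal term is actually shown to be dominated by the diagonal $|T|\,|A||B|$ under the stated hypotheses, the proof is an outline of the known argument rather than a proof.
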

Using recent advances in additive combinatorics, it has been indicated by Shkredov and Shparlinski \cite{shpar} that if we study the sums with more variables, then the problem becomes much easier. Namely, given four sets $\T, \U, \V, \W$ in $\mathbb{F}_p^*$ and two sequences of weights $\alpha=(\alpha_t)_{t\in \T},$ $\beta=(\beta_{u, v, w})_{u, v, w\in \U\times \V\times \W}$ with 
\[\max_{t\in \T}|\alpha_t|\le 1, ~\max_{(u, v, w)\in \U\times \V\times \W}|\beta_{uvw}|\le 1,\]
 they considered the following sum
\[S_\chi(\T, \U, \V, \W, \alpha, \beta, f):=\sum_{t\in \T, u\in \U, v\in \V, w\in \W}\alpha_t\beta_{uvw}\chi(t+f(u, v, w)),\]
where $f(x, y, z)$ is a polynomial in three variables in $\mathbb{F}_p[x, y, z].$ \\

Throughout this paper, we denote the cardinality of $\T, \U, \V, \W \subset \mathbb F_p$ by $T, U, V,W,$ respectively. We use $X\ll Y$ if $X\le C Y$ for some constant $C>0$ independent of the parameters related to $X$ and $Y,$ and  write $X\gg Y$ for $Y\ll X.$ The notation $X\sim Y$ means that both $X\ll Y$ and $Y\ll X$ hold. In addition, we use $X\lesssim Y$ to indicate that $X\ll (\log{Y}) Y.$

For the specific cases $f(x, y, z)=x+yz$ and  $f(x, y, z)=x(y+z),$ Shkredov and Shparlinski \cite{shpar} deduced the following result. 
\bigskip

\begin{theorem}[\cite{shpar}]\label{thmx}
For $\mathcal{U}, \mathcal{V}, \mathcal{W}, \T\subset \mathbb{F}_p^*,$ let $M=\max\{U, V, W\}.$ If $f(x, y, z)=x+yz$ or $f(x, y, z)=x(y+z),$ then for any fixed integer $n\ge 1,$ we have 
\begin{align*}
&\left\vert S_\chi(\T, \U, \V, \W, \alpha, \beta, f)\right\vert \ll 
\left((UVW)^{1-\frac{1}{4n}}+M^{\frac{1}{2n}}(UVW)^{1-\frac{1}{2n}}\right)\cdot \begin{cases} T^{\frac{1}{2}}p^{\frac{1}{2}} ~~\mbox{if}~~ n=1\\ Tp^{\frac{1}{4n}}+T^{\frac{1}{2}}p^{\frac{1}{2n}} ~~\mbox{if}~~n \ge 2.\end{cases}
\end{align*}
\end{theorem}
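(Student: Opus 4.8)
The plan is to collapse the $(u,v,w)$-sum into the level sets of $f$, turning $S_\chi$ into a one-variable character sum in $t$ weighted by those level sets, and then to control an even moment of that sum by completing it over $\Fp$ and applying Weil's bound; the only genuinely arithmetic input is a bound for the additive energy of $f$. Concretely, set $\Phi(s)=\sum_{f(u,v,w)=s}\beta_{uvw}$, so that $S_\chi=\sum_{t\in\T}\alpha_t\Psi(t)$ with $\Psi(t)=\sum_s\Phi(s)\chi(t+s)$. Since $|\beta_{uvw}|\le1$ we have $\|\Phi\|_1\le UVW$ and $\|\Phi\|_2^2\le E(f)$, where $E(f)=\#\{(u,v,w),(u',v',w'): f(u,v,w)=f(u',v',w')\}$ is the additive energy of $f$ over $\U\times\V\times\W$. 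The entire theorem will follow once $E(f)$ is shown to satisfy $E(f)\ll (UVW)^{3/2}+M\,UVW$.

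For $n=1$ I would apply Cauchy--Schwarz in $t$, bounding $|S_\chi|^2\le T\sum_{t\in\Fp}|\Psi(t)|^2$, and then complete the second moment: expanding $|\Psi|^2$ and summing $t$ over all of $\Fp$ leaves the inner sum $\sum_t\chi\big((t+s)/(t+s')\big)$, which is $p+O(1)$ when $s=s'$ and $O(1)$ otherwise. Hence $\sum_{t\in\Fp}|\Psi|^2\ll p\,E(f)+(UVW)^2\ll p\,E(f)$ in the relevant range, giving $|S_\chi|\ll T^{1/2}p^{1/2}E(f)^{1/2}$; the energy bound turns $E(f)^{1/2}$ into $(UVW)^{3/4}+M^{1/2}(UVW)^{1/2}$, which is exactly the $n=1$ case.

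For $n\ge2$ the gain in the $p$-aspect comes from passing to the $2n$-th moment. Applying H\"older in $t$ reduces matters to $\sum_{t}|\Psi(t)|^{2n}\le\sum_{t\in\Fp}|\Psi(t)|^{2n}$, which I would control by the interpolation $\|\Psi\|_{2n}^{2n}\le\|\Psi\|_\infty^{2n-2}\,\|\Psi\|_2^2$ together with $\|\Psi\|_\infty\le\|\Phi\|_1\le UVW$. This is precisely the mechanism that converts the single energy estimate into the $n$-dependent factor, since $(UVW)^{1-1/n}E(f)^{1/(2n)}$ expands, under $E(f)\ll(UVW)^{3/2}+M\,UVW$, into $(UVW)^{1-1/(4n)}+M^{1/(2n)}(UVW)^{1-1/(2n)}$. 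The leftover second moment of $|\Psi|$ is again evaluated by completion and Weil; estimating the $t$-average either over all of $\Fp$ (which carries a factor $p$, hence $p^{1/(2n)}$ after the $2n$-th root) or incompletely over $\T$ via P\'olya--Vinogradov (which carries a factor $T$ against a $\sqrt p$ completion error, hence $p^{1/(4n)}$), and keeping whichever is smaller across the two regimes $T^2\lessgtr UVW$, produces the two summands $T^{1/2}p^{1/(2n)}$ and $Tp^{1/(4n)}$.

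The hard part is the energy bound $E(f)\ll(UVW)^{3/2}+M\,UVW$ for the two polynomials. For $f=x+yz$ I would write the energy equation as $u-u'=v'w'-vw$ and count solutions by point--line (equivalently point--hyperbola) incidences in $\Fp^2$, invoking a sharp incidence bound valid in the admissible size range. For $f=x(y+z)$ I would substitute $a=v+w$, $a'=v'+w'$ to rewrite the equation multiplicatively as $ua=u'a'$ and reduce the count to the multiplicative energy of $\U\cdot(\V+\W)$, again governed by an incidence/sum--product estimate; the size restrictions needed for these incidence inputs to be effective relative to $p$ are exactly what confine the result to its stated range. A secondary point to watch in the Weil step is that off the diagonal the rational function $\prod_i(t+s_i)/\prod_j(t+s_j')$ must fail to be a perfect $d$-th power for $d=\mathrm{ord}(\chi)$, which requires a short separate argument when $\chi$ has small order.
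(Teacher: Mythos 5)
Your $n=1$ argument is sound and matches the standard route (Cauchy--Schwarz in $t$, completion of the second moment over $\F_p$, then the energy bound), and your bookkeeping of how $E\ll (UVW)^{3/2}+M\,UVW$ produces the two energy factors $(UVW)^{1-1/4n}$ and $M^{1/2n}(UVW)^{1-1/2n}$ is correct. The $n\ge 2$ mechanism, however, has a genuine gap: you place the $2n$-th moment on $\Psi(t)=\sum_s\Phi(s)\chi(t+s)$ and then interpolate via $\|\Psi\|_{2n}^{2n}\le\|\Psi\|_\infty^{2n-2}\|\Psi\|_2^2$ with the trivial bound $\|\Psi\|_\infty\le UVW$. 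Chasing exponents, this gives $|S_\chi|\ll T^{1-\frac{1}{2n}}p^{\frac{1}{2n}}\bigl((UVW)^{1-\frac{1}{4n}}+M^{\frac{1}{2n}}(UVW)^{1-\frac{1}{2n}}\bigr)$, and for $n\ge 2$ with $T<p^{1/2}$ (precisely the regime the theorem is designed for, e.g.\ $T\sim p^{\epsilon}$ with $n\approx 1/\epsilon$) the factor $T^{1-1/2n}p^{1/2n}$ is strictly larger than both $Tp^{1/4n}$ and $T^{1/2}p^{1/2n}$, so the stated bound is not recovered. The fallback of treating the $t$-sum ``incompletely via P\'olya--Vinogradov'' cannot rescue this: $\Psi$ is a weighted sum, and your Weil step only ever sees pairs $(s,s')$ of shifts, never $2n$-tuples of elements of $\T$, so the structure $T^{2n}p^{1/2}+T^{n}p$ is simply not accessible from a second moment of $\Psi$.

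The route actually used (both for the quoted theorem and for this paper's generalization, Theorem \ref{thm2}) puts the high moment on the other factor: write $|S_\chi|\le\sum_{\lambda}N(\lambda)|\sigma(\lambda)|$ with $\sigma(\lambda)=\sum_{t\in\T}\alpha_t\chi(t+\lambda)$, apply H\"older with exponents $2n$ and $\tfrac{2n}{2n-1}$, bound $\bigl(\sum_{\lambda}N(\lambda)^{2n/(2n-1)}\bigr)^{2n-1}\le (UVW)^{2n-2}E$ by interpolating $\ell^1$ and $\ell^2$, and invoke the complete-sum estimate $\sum_{\lambda\in\F_p}|\sigma(\lambda)|^{2n}\ll T^{2n}p^{1/2}+T^{n}p$ (Lemma \ref{lm1754}); it is the $O(T^{n})$ count of degenerate $2n$-tuples $(t_1,\dots,t_{2n})\in\T^{2n}$ in that Weil argument that produces $T^{n}p$, hence $T^{1/2}p^{1/2n}$ after the $2n$-th root. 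Your reduction to the energy $E$ and the observation that everything hinges on $E\ll(UVW)^{3/2}+M\,UVW$ are otherwise on target (the analogous bound for general quadratics is this paper's Theorem \ref{thm12}, proved via Rudnev's point--plane incidence theorem rather than planar point--line incidences), but as written the $n\ge 2$ case does not close.
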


We note that this theorem is an improvement of the work of Hanson \cite{ha}. In order to indicate the strength of Theorem \ref{thmx}, the following interesting cases were considered by Shkredov and Shparlinski \cite{shpar}. 
\begin{enumerate}
\item If $U\sim V\sim W\sim T\sim N,$ then by setting $n=1,$ we have  
\[ \left|S_\chi(\T, \U, \V, \W, \alpha, \beta, f)\right|\ll N^{\frac{11}{4}}p^{\frac{1}{2}},\]
which is non-trivial whenever $N\ge p^{\frac{2}{5}+\epsilon}$ for some $\epsilon >0.$
\item Suppose that $T\ge p^\epsilon$ for some $\epsilon>0$ and $U\sim V\sim W\sim N.$ Taking $n=\lfloor\frac{2}{\epsilon}\rfloor+1,$ we have 
\[\left|S_\chi(\T, \U, \V, \W, \alpha, \beta, f)\right|\ll N^{3-\frac{3}{4n}} T p^{\frac{1}{4n}},\]
which is non-trivial as long as $N\ge p^{\frac{1}{3}+\delta}$ for some $\delta >0.$
\end{enumerate}
One can see \cite{4, 5, 7, 8, ha, alex, vol, m1, m2, s-0} and references therein for related results.

\subsection{Statement of main results}
The main purpose of this paper is to extend Theorem \ref{thmx} to a general form. More precisely, we consider any quadratic polynomial $f(x, y, z)$ which is not in the form of $g(h(x)+k(y)+l(z))$ for some polynomials $g, h, k, l$ in one variable.  We will also study the case of polynomials $f$ in two variables. Our first result is as follows. 
\bigskip
\begin{theorem}\label{thm2}
Let $f\in \F_p[x,y,z]$ be a quadratic polynomial that depends on each variable and that does not have the form $g(h(x)+k(y)+l(z)).$ For $\mathcal{U}, \mathcal{V}, \mathcal{W}\subset \mathbb{F}_p^*,$ let $\Omega=\max\{U^{-1}, V^{-1}, W^{-1}\}$ and let $\T\subset \mathbb{F}_p^*.$ Then the following statements hold:
\begin{enumerate}
\item If $UVW\ll p^2,$ then we have 
\begin{equation*}
\left\vert S_\chi(\T, \U, \V, \W, \alpha, \beta, f)\right\vert \ll 
\left((UVW)^{1-\frac{1}{4n}}+ UVW \Omega^{\frac{1}{n}}\right)\cdot \begin{cases} T^{\frac{1}{2}}p^{\frac{1}{2}} ~~\mbox{if}~~ n=1\\ Tp^{\frac{1}{4n}}+T^{\frac{1}{2}}p^{\frac{1}{2n}} ~~\mbox{if}~~n \ge 2.\end{cases}
\end{equation*}
\item If $UVW\gg p^2,$ then we have 
\[|S_\chi(\T, \U, \V, \W, \alpha, \beta, f)|\ll \left(\frac{UVW}{p^{1/2n}}+UVW \Omega^{\frac{1}{n}}\right)\cdot \begin{cases}T^{\frac{1}{2}}p^{\frac{1}{2}} ~~\mbox{if}~~ n=1\\ Tp^{\frac{1}{4n}}+T^{\frac{1}{2}}p^{\frac{1}{2n}} ~~\mbox{if}~~n \ge 2.\end{cases}\]
\end{enumerate}
\end{theorem}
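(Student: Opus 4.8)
The plan is to run the same skeleton as the proof of Theorem \ref{thmx}: isolate the $t$-sum by Hölder, feed the resulting complete moment of the one-variable character sum into a Weil estimate, and absorb the $(u,v,w)$-side into an \emph{energy estimate for} $f$. The only genuinely new ingredient is that for an arbitrary non-degenerate quadratic this energy must be read off from the geometry of $f$ rather than from the explicit shapes $x+yz$ or $x(y+z)$. Concretely, write $W_r=\sum_{t\in\T}\alpha_t\chi(t+r)$ and $N(r)=\#\{(u,v,w)\in\U\times\V\times\W:\ f(u,v,w)=r\}$, so that $|S_\chi|\le\sum_r N(r)|W_r|$ after using $|\beta_{uvw}|\le1$. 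Hölder in $r$ with exponents $2n$ and $\tfrac{2n}{2n-1}$, together with $\sum_r N(r)=UVW$, gives
\[|S_\chi|^{2n}\le (UVW)^{2n-1}\sum_r N(r)|W_r|^{2n}=(UVW)^{2n-1}\sum_{u,v,w}\bigl|W_{f(u,v,w)}\bigr|^{2n}.\]
The last identity is the crucial reinterpretation: the weighted moment is an unweighted $2n$-th moment of the one-variable character sum precomposed with $f$.

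Next I would record the benchmark complete moment. Expanding $|W_r|^{2n}$ and summing $r$ over all of $\Fp$ produces sums $\sum_{r\in\Fp}\chi\bigl(\prod_i(t_i+r)/\prod_j(s_j+r)\bigr)$, which by Weil's bound exhibit square-root cancellation unless the rational function is a constant times a perfect $e$-th power ($e$ the order of $\chi$), i.e. unless the tuple $(t_1,\dots,t_n,s_1,\dots,s_n)$ is diagonal. Counting the diagonal tuples against the off-diagonal error yields $\sum_{r\in\Fp}|W_r|^{2n}\ll T^n p+T^{2n}p^{1/2}$, and its $2n$-th root is exactly the $\{\cdots\}$ factor in the statement (the $n=1$ case reducing to $Tp$). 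This step is responsible for the entire $T$- and $p$-dependence and for the appearance of the free parameter $n$.

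The heart of the argument is bounding $\sum_{u,v,w}|W_{f(u,v,w)}|^{2n}$. Expanding and interchanging reduces this to the weighted complete sums $\sum_{u,v,w}\chi\bigl(R(f(u,v,w))\bigr)$ with $R=\prod_i(\,\cdot+t_i)/\prod_j(\,\cdot+s_j)$. The diagonal tuples contribute $\ll T^n\,UVW$. For a non-diagonal tuple the natural route is to fix the two variables ranging over the two smaller of $U,V,W$ and apply the one-variable Weil bound in the remaining variable; because $f$ is quadratic and depends on each variable, for all but a controlled set of fixed pairs the map $w\mapsto R(f(u,v,w))$ is not a constant times a perfect power and cancellation occurs. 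Summing the good pairs reproduces the complete-moment factor carrying the spatial weight $(UVW)^{1/2}$ when $UVW\ll p^2$ (no equidistribution of the values of $f$ is available) and $UVW/p$ when $UVW\gg p^2$ (full equidistribution), which is precisely the dichotomy between the two displayed bounds, the threshold $UVW=p^2$ being the comparison of $(UVW)^{1/2}$ with $UVW/p$. The exceptional pairs, on which $R\circ f$ degenerates to an $e$-th power, are counted by the energy of $f$ and contribute the clean term $UVW\,\Omega^{1/n}$ after the $2n$-th root is taken.

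The main obstacle is this energy estimate, namely bounding the number of exceptional configurations \emph{uniformly over every} quadratic $f$ that depends on all three variables and is not of the form $g(h(x)+k(y)+l(z))$. This is exactly where the structural hypothesis is indispensable: $R\circ f$ escapes the Weil bound precisely when the level sets of $f$ are aligned with those of a perfect power, and the excluded sum-of-one-variable-functions shape is the degenerate situation in which this alignment persists for far too many pairs. I expect the proof of the energy bound to proceed by completing the square and classifying the quadratic part of $f$, verifying that outside the excluded form the degenerate locus is thin enough to yield the factor $\Omega=\max\{U^{-1},V^{-1},W^{-1}\}$; controlling this locus for the full family of admissible quadratics, rather than for one explicit $f$, is the step I anticipate to be the most delicate.
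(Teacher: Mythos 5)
Your opening reduction is essentially the paper's: bound $|S_\chi|\le\sum_{\lambda}N(\lambda)|W_\lambda|$, apply H\"older, and invoke the complete moment $\sum_{\lambda\in\F_p}|W_\lambda|^{2n}\ll T^np+T^{2n}p^{1/2}$ (this is Lemma \ref{lm1754}, quoted from Shkredov--Shparlinski, not reproved via Weil in this paper). The paper applies H\"older a second time to peel off exactly $\bigl(\sum_\lambda N(\lambda)\bigr)^{2n-2}\cdot\sum_\lambda N(\lambda)^2=(UVW)^{2n-2}E$, where $E$ is the energy of $f$ on $\U\times\V\times\W$; your variant, which keeps the weight inside as $\sum_\lambda N(\lambda)|W_\lambda|^{2n}$, could be made to work by one more Cauchy--Schwarz (at the cost of reparametrizing $n$), so the H\"older bookkeeping is not the problem.

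The genuine gap is in the step you yourself flag as the heart of the matter. You propose to control the $(u,v,w)$-side by expanding $|W_{f(u,v,w)}|^{2n}$ into sums $\sum_{u,v,w}\chi(R(f(u,v,w)))$ and applying the one-variable Weil bound in the variable ranging over the largest set, with the degenerate tuples ``counted by the energy of $f$.'' This cannot deliver the theorem. First, the sums over $\U,\V,\W$ are incomplete, so Weil does not apply directly, and completion costs a factor of $p^{1/2}$ per variable that destroys the claimed exponents. Second, and more fundamentally, the main term $(UVW)^{1-\frac{1}{4n}}$ comes from the energy bound $E\ll(UVW)^{3/2}+(UVW)^2\Omega^2$ (Theorem \ref{thm12}), and the saving of $(UVW)^{1/2}$ over the trivial count is an incidence-geometric phenomenon, not an exponential-sum one: the paper proves it by encoding $f(u,v,w)=f(u',v',w')$ as point--plane incidences in $\F_p^3$ and invoking Rudnev's theorem (with Vinh's theorem replacing it in the range $UVW\gg p^2$, which is where your $UVW=p^2$ threshold actually comes from --- it is the hypothesis $|\RR|\ll p^2$ of Rudnev's theorem, not an equidistribution dichotomy). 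Getting there requires reducing $f$ to four canonical forms, verifying the non-degeneracy conditions forced by $f\ne g(h(x)+k(y)+l(z))$, and bounding the maximal number of collinear points in the relevant point sets (with K\H{o}vari--S\'os--Tur\'an handling the low-multiplicity/high-multiplicity cross terms). ``Completing the square and classifying the quadratic part'' plus one-variable Weil, as you suggest, yields at best the trivial energy bound $E\ll U^2V^2W$ and hence no power saving in $UVW$. Your sketch also conflates two different degeneracy notions: the set of pairs where $R\circ f$ is a perfect $e$-th power is not measured by the energy $\sum_\lambda N(\lambda)^2$, so the claimed emergence of the term $UVW\,\Omega^{1/n}$ is unsupported.
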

\bigskip

As an immediate consequence of Theorem  \ref{thm2}, we get the following corollaries.
\bigskip

\begin{corollary}
Let $f\in \F_p[x,y,z]$ be a quadratic polynomial defined in Theorem \ref{thm2}.
Let $\mathcal{U}, \mathcal{V}, \mathcal{W}, \T\subset \mathbb{F}_p^*$ such that $U\sim V\sim W \sim N$ and $T\ge p^\epsilon$ for some $\epsilon>0.$ Then the following statements hold:
\begin{enumerate}
\item If  $p^{\frac{1}{3}+\delta} \ll N\ll p^{\frac{2}{3}}$ for some $\delta>0$  and $n> \lfloor\frac{1}{2\epsilon}\rfloor+1,$ then we have 
\begin{align*}
&\left\vert S_\chi(\T, \U, \V, \W, \alpha, \beta, f)\right\vert \ll N^{3-\frac{3}{4n}} T p^{\frac{1}{4n}}.
\end{align*}
\item If  $N\gg p^{\frac{2}{3}}$ and $n> \lfloor \frac{1}{2\epsilon}\rfloor+1,$ then we have \[|S_\chi(\T, \U, \V, \W, \alpha, \beta, f)|\ll  \frac{N^3T}{p^{1/4n}}.\]
\end{enumerate}
\end{corollary}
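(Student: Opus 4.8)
The plan is to derive both parts by substituting the hypotheses $U\sim V\sim W\sim N$ directly into Theorem \ref{thm2} and then simplifying each of the competing factors. Under these hypotheses one has $UVW\sim N^3$ and $\Omega=\max\{U^{-1},V^{-1},W^{-1}\}\sim N^{-1}$, so the geometric factor in the theorem becomes either $(UVW)^{1-\frac{1}{4n}}+UVW\,\Omega^{1/n}\sim N^{3-\frac{3}{4n}}+N^{3-\frac1n}$ (when $UVW\ll p^2$) or $\frac{UVW}{p^{1/2n}}+UVW\,\Omega^{1/n}\sim \frac{N^3}{p^{1/2n}}+N^{3-\frac1n}$ (when $UVW\gg p^2$). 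Since $N^3\ll p^2\iff N\ll p^{2/3}$, Part $(1)$ falls under the first case of Theorem \ref{thm2} and Part $(2)$ under the second, so the argument splits accordingly.

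First I would dispose of the $T$-factor, which is common to both parts. Because $\lfloor\frac{1}{2\epsilon}\rfloor+1>\frac{1}{2\epsilon}$, the hypothesis $n>\lfloor\frac{1}{2\epsilon}\rfloor+1$ forces $n\ge 2$ (so we are in the $n\ge2$ branch) and, more importantly, $\frac{1}{2n}<\epsilon$. Combined with $T\ge p^\epsilon$ this gives $p^{1/2n}\ll T$, equivalently $T^{1/2}p^{1/2n}\ll T\,p^{1/4n}$; hence $Tp^{\frac{1}{4n}}+T^{\frac12}p^{\frac{1}{2n}}\ll T p^{\frac{1}{4n}}$. Thus in both parts the $T$-dependence collapses to the single term $T p^{1/4n}$.

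For Part $(1)$, with $N\ll p^{2/3}$ I use the first estimate of Theorem \ref{thm2}. Since $\frac{3}{4n}<\frac1n$ we have $N^{3-\frac{3}{4n}}\ge N^{3-\frac1n}$ for $N\ge1$, so the first term dominates and the geometric factor is $\ll N^{3-\frac{3}{4n}}$. Multiplying by $Tp^{1/4n}$ yields $\left|S_\chi(\T,\U,\V,\W,\alpha,\beta,f)\right|\ll N^{3-\frac{3}{4n}}Tp^{\frac{1}{4n}}$, as claimed; the lower bound $N\gg p^{1/3+\delta}$ is exactly what makes this nontrivial against the trivial bound $\ll N^3 T$.

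For Part $(2)$, with $N\gg p^{2/3}$ I use the second estimate of Theorem \ref{thm2} and multiply its geometric factor by $Tp^{1/4n}$, obtaining a bound of the shape $\bigl(\tfrac{N^3}{p^{1/4n}}+N^{3-\frac1n}p^{\frac{1}{4n}}\bigr)T$. A direct comparison shows $N^{3-\frac1n}p^{\frac{1}{4n}}\ll \frac{N^3}{p^{1/4n}}$ precisely when $N\gg p^{1/2}$, which holds since $N\gg p^{2/3}$; hence the first term dominates and $\left|S_\chi(\T,\U,\V,\W,\alpha,\beta,f)\right|\ll \frac{N^3 T}{p^{1/4n}}$. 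The only real care in the whole argument is this bookkeeping of which of the two competing geometric terms wins in each regime, together with the verification that the exponent inequalities $\frac{1}{2n}<\epsilon$ and $N\gg p^{1/2}$ follow from the stated hypotheses; there is no genuine analytic obstacle beyond Theorem \ref{thm2} itself.
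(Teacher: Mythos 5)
Your derivation is correct and is exactly the (omitted) argument the paper intends when it calls this an immediate consequence of Theorem \ref{thm2}: substitute $UVW\sim N^3$, $\Omega\sim N^{-1}$, note that $n>\lfloor\frac{1}{2\epsilon}\rfloor+1$ gives $n\ge 2$ and $p^{1/2n}\le T$ so the $T$-factor collapses to $Tp^{1/4n}$, and then identify the dominant geometric term in each regime ($N^{3-3/4n}$ when $N\ll p^{2/3}$, and $N^3p^{-1/2n}$ when $N\gg p^{2/3}$ since $N\gg p^{1/2}$). All the exponent comparisons you record are accurate, so nothing further is needed.
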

\bigskip
\begin{corollary}
Let $f\in \F_p[x,y,z]$ be a quadratic polynomial defined in Theorem \ref{thm2}. For $\mathcal{U}, \mathcal{V}, \mathcal{W}, \T\subset \mathbb{F}_p^*$ with $U\sim V\sim W\sim T\sim N,$ we have the following conclusions:
\begin{enumerate}
\item Suppose that $p^{\frac{2}{5}+\delta} \ll N\ll p^{\frac{2}{3}}$ for some $\delta>0,$ then we have 
\begin{align*}
&\left\vert S_\chi(\T, \U, \V, \W, \alpha, \beta, f)\right\vert \ll N^{11/4}p^{1/2} ~(n=1).
\end{align*}
\item Suppose that $N\gg p^{2/3},$ then we have \[|S_\chi(\T, \U, \V, \W, \alpha, \beta, f)|\ll N^{7/2} ~(n=1).\]
\end{enumerate}
\end{corollary}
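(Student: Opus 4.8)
The plan is to obtain both parts as direct specializations of Theorem \ref{thm2}, taking $n = 1$ and the balanced regime $U \sim V \sim W \sim T \sim N$. Under this specialization one has $UVW \sim N^3$, $T \sim N$, and $\Omega = \max\{U^{-1}, V^{-1}, W^{-1}\} \sim N^{-1}$, so each building block in Theorem \ref{thm2} becomes an explicit power of $N$ (and $p$). The only real work will be to decide, within the first factor of each case, which of the two competing summands dominates; the rest is bookkeeping.

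For part (1), I would note that the hypothesis $N \ll p^{2/3}$ is precisely $UVW \sim N^3 \ll p^2$, so case (1) of Theorem \ref{thm2} applies. With $n = 1$ the first factor reads $(UVW)^{1 - 1/4} + UVW\,\Omega \sim N^{9/4} + N^2$, and since $N^{9/4} \gg N^2$ for large $N$ this collapses to $N^{9/4}$. Multiplying by the $n = 1$ tail $T^{1/2}p^{1/2} \sim N^{1/2}p^{1/2}$ then yields $N^{11/4}p^{1/2}$, as claimed. I would also point out that the lower bound $N \gg p^{2/5+\delta}$ plays no role in deriving the inequality itself; it serves only to guarantee the estimate is nontrivial, since the trivial bound is $T \cdot UVW \sim N^4$ and $N^{11/4}p^{1/2} < N^4$ exactly when $N > p^{2/5}$.

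For part (2), the hypothesis $N \gg p^{2/3}$ gives $UVW \sim N^3 \gg p^2$, so case (2) of Theorem \ref{thm2} applies. The first factor with $n = 1$ becomes $\frac{UVW}{p^{1/2}} + UVW\,\Omega \sim \frac{N^3}{p^{1/2}} + N^2$; comparing the two summands amounts to comparing $N$ with $p^{1/2}$, and since $N \gg p^{2/3} \gg p^{1/2}$ the term $N^3/p^{1/2}$ wins. Multiplying by $T^{1/2}p^{1/2} \sim N^{1/2}p^{1/2}$ gives $\frac{N^3}{p^{1/2}} \cdot N^{1/2}p^{1/2} = N^{7/2}$.

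I do not expect any serious obstacle, as the corollary is purely a specialization of Theorem \ref{thm2}: the argument is a matter of substituting the balanced sizes, simplifying the powers of $N$ and $p$, and tracking the dominant term in each regime. The only points worth recording explicitly are the two dominance comparisons ($N^{9/4} \gg N^2$ in case (1), and $N^3/p^{1/2} \gg N^2$ in case (2)), since these are what reduce the two-term factor to the clean single expressions appearing in the statement.
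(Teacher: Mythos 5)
Your proposal is correct and matches the paper's intent exactly: the corollary is stated as an immediate specialization of Theorem \ref{thm2} with $n=1$ and balanced sizes, and your substitutions, the dominance comparisons $N^{9/4}\gg N^{2}$ and $N^{3}/p^{1/2}\gg N^{2}$, and the observation that $N\gg p^{2/5+\delta}$ only guarantees nontriviality are all accurate.
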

\bigskip

Now we address the results for two variable quadratic polynomial $f \in \mathbb F_p[x,y].$ 
Let $\chi$ be a non-trivial multiplicative character of $\mathbb{F}_p^*.$ Given three sets $\T, \U, \V$ in $\mathbb{F}_p^*,$  a polynomial $f\in \mathbb F_p[x,y],$ and two sequences of weights $\alpha=(\alpha_t)_{t\in \T}, \beta=(\beta_{u, v})_{u, v\in \U\times \V}$ with 
\[\max_{t\in \T}|\alpha_t|\le 1, ~\max_{(u, v)\in \U\times \V}|\beta_{uv}|\le 1,\] we define 
\[S_\chi(\T, \U, \V, \alpha, \beta, f)=\sum_{t\in \T, u\in \U, v\in \V}\alpha_t\beta_{uv}\chi(t+f(u, v)).\]

We are interested in finding an upper bound of the sum $S_\chi(\T, \U, \V, \alpha, \beta, f).$
In particular, we deduce strong results on this problems in the case when  $f\in \mathbb F_p[x,y]$ is a quadratic polynomial which is not of the form $g(\alpha x + \beta y)$ for some polynomial $g$ in one variable.
Relating this problem for two variable polynomials to that of three variable polynomials, we are able to prove the following result for two variable polynomials.
\bigskip
\begin{theorem}\label{thm1}
Let $f\in \F_p[x,y]$ be a quadratic polynomial which depends on each variable and which does not take the form $g(a x+b y).$ Given $\U, \V, \T \subset \mathbb{F}_p^*$ with $|\U-\V|\sim kU$ for some parameter $k>0,$ the following two statements hold:
\begin{enumerate}
\item If $V^2|\U-\V|\ll p^2,$ then we have 
\[|S_\chi(\T, \U, \V, \alpha, \beta, f)|\lesssim
\left( k^{\frac{3}{4n}}  \cdot \frac{UV}{U^{1/4n}V^{1/2n}} +k^{\frac{1}{n}}\cdot \frac{UV}{V^{1/n}}\right)\cdot \begin{cases}T^{\frac{1}{2}}p^{\frac{1}{2}} ~~\mbox{if}~~n=1\\
Tp^{\frac{1}{4n}}+T^{\frac{1}{2}}p^{\frac{1}{2n}} ~~\mbox{if} ~~n\ge 2.\end{cases}\]
\item If $V^2|\U-\V|\gg p^2,$ then we have 
\[|S_\chi(\T, \U, \V, \alpha, \beta, f)|\lesssim \left(k^{\frac{1}{n}}\cdot \frac{UV}{p^{1/2n}}+k^{\frac{1}{n}}\cdot \frac{UV}{V^{1/n}}\right)\cdot \begin{cases}T^{\frac{1}{2}}p^{\frac{1}{2}} ~~\mbox{if}~~n=1\\
Tp^{\frac{1}{4n}}+T^{\frac{1}{2}}p^{\frac{1}{2n}} ~~\mbox{if} ~~n\ge 2.\end{cases}\]
\end{enumerate}
\end{theorem}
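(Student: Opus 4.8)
The plan is to deduce Theorem \ref{thm1} from Theorem \ref{thm2} by turning the difference set $\W:=\U-\V$ into a genuine third variable. Since $\U,\V\subset\F_p^*$ one always has $|\U-\V|\ge\max\{U,V\}$ (fix $u_0\in\U$, so $u_0-\V\subseteq\U-\V$ has $V$ elements), hence the hypothesis $|\U-\V|\sim kU$ forces $k\gg 1$ and $V\le W:=|\W|\sim kU$. The observation driving the reduction is that for every $u\in\U$ and every $x\in\V$ we have $u-x\in\U-\V=\W$, so each $u\in\U$ has exactly $V$ representations $u=x+z$ with $x\in\V,\ z\in\W$. Setting $\hat f(x,y,z):=f(x+z,y)$ and $\beta'_{xyz}:=\beta_{(x+z)y}\,\mathbf 1[x+z\in\U]$ (so $|\beta'_{xyz}|\le 1$), this yields the exact identity
\[ S_\chi(\T,\V,\V,\W,\alpha,\beta',\hat f)=V\cdot S_\chi(\T,\U,\V,\alpha,\beta,f),\]
so it suffices to bound the three--variable sum on the left and divide by $V$.

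Next I would verify that $\hat f$ satisfies the hypotheses of Theorem \ref{thm2}. It is clearly quadratic, and since $f$ depends on both variables, $\hat f$ depends on $y$ and on $x,z$ (through $x+z$). The only real point is to rule out the degenerate shape $\hat f(x,y,z)=g(h(x)+k(y)+l(z))$. A short computation shows that if $f(x+z,y)=g(h(x)+k(y)+l(z))$ then $h$ and $l$ must be linear with equal leading coefficient, and this, together with the assumption that $f$ is \emph{not} of the form $g(au+bv)$, forces $f(u,v)=au+q(v)$ to be linear in $u$ with no $uv$--term (and $q$ quadratic). This is the one obstruction, and it is not symmetric: the analogous bad case for splitting the other variable is $f(u,v)=bv+p(u)$, and no quadratic $f$ can be of both shapes at once (matching $v^2$--coefficients gives a contradiction). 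Hence, after splitting $v$ instead of $u$ in that single case, the polynomial $\hat f$ always satisfies the hypotheses of Theorem \ref{thm2}.

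I would then apply Theorem \ref{thm2} to $\hat f$ with the triple $(\V,\V,\W)$. Here $UVW$ becomes $V^2|\U-\V|$, so the dichotomy $V^2|\U-\V|\lessgtr p^2$ matches the two cases of the statement, and $\Omega=\max\{V^{-1},V^{-1},W^{-1}\}=V^{-1}$ since $V\le W$. Dividing the resulting bound by $V$ and using $W\sim kU$ collapses the first term to $k^{1-\frac{1}{4n}}\,\frac{UV}{U^{1/4n}V^{1/2n}}$ and the second to $k^{1}\,\frac{UV}{V^{1/n}}$, with the factors in $T$ and $p$ carried over verbatim from Theorem \ref{thm2}; the regime $V^2|\U-\V|\gg p^2$ is identical via the second case of that theorem. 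When $n=1$ these are exactly the two terms claimed in Theorem \ref{thm1}, so the reduction is already sharp there.

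The hard part is the remaining factor for $n\ge 2$: the black--box application above produces $k^{1-\frac{1}{4n}}$ and $k^{1}$ in place of the sharp $k^{\frac{3}{4n}}$ and $k^{\frac1n}$, an overshoot by $k^{1-\frac1n}$. This loss is structural, since Theorem \ref{thm2} only sees the \emph{cardinality} of $\W$, whereas $\W=\U-\V$ carries strong additive structure that the uniform count of $V$ representations discards. To recover it I would open up the energy estimate underlying Theorem \ref{thm2} and instead bound the order--$n$ additive energy of the value set $\{f(u,v):(u,v)\in\U\times\V\}$ directly, using $|\U-\V|\sim kU$ to control the representation function of $\U-\V$. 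Decomposing that representation function into dyadic level sets and running the energy bound on each of the $O(\log p)$ scales is what produces the logarithmic factor hidden in the symbol $\lesssim$, and extracting the precise saving $k^{1-\frac1n}$ from the higher--moment count is the main obstacle.
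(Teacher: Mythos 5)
There is a genuine gap: your argument only proves the theorem for $n=1$, and you say so yourself. The black-box reduction to Theorem \ref{thm2} via $\hat f(x,y,z)=f(x+z,y)$ and the identity $S_\chi(\T,\V,\V,\W,\alpha,\beta',\hat f)=V\cdot S_\chi(\T,\U,\V,\alpha,\beta,f)$ is correct and clean (and your observation that the single degenerate shape $f(u,v)=au+q(v)$ must be handled by splitting the other variable is a point the paper's ``by a direct computation'' glosses over), but for $n\ge 2$ you are left with an unproved loss of $k^{1-\frac1n}$, which you explicitly defer as ``the main obstacle.'' Since $k\gg 1$ always holds, this is a real weakening of the stated bound, not a cosmetic one.

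Moreover, the direction you propose for closing the gap --- bounding an ``order-$n$ additive energy'' of the value set --- is not what is needed. The paper never uses moments of the representation function higher than the second. It proves a bound on the \emph{two-variable} energy $E=\sum_t m_t^2$ (its Theorem \ref{theorem21}: $E\lesssim V|\U-\V|^{3/2}+|\U-\V|^2$), by dyadically decomposing the level sets of $m_t$ and bounding each level set via Cauchy--Schwarz against the \emph{three-variable} energy of $h(x,y,z)=f(x-z,y)$ supplied by Theorem \ref{thm12}; this is where the $\log$ in $\lesssim$ and the use of $|\U-\V|\sim kU$ enter. It then runs the H\"older step of Theorem \ref{thm2} on the two-variable counting function $N(\U,\V,\lambda)$, so that the first-moment factor is $\bigl(\sum_\lambda N\bigr)^{2n-2}=(UV)^{2n-2}$ rather than $(V^2|\U-\V|)^{2n-2}$. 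Your entire $k^{1-\frac1n}$ overshoot sits in that first-moment factor: applying Theorem \ref{thm2} to $(\V,\V,\W)$ and dividing by $V$ gives $(V W)^{1-\frac1n}E^{\frac{1}{2n}}$ in place of $(UV)^{1-\frac1n}E^{\frac{1}{2n}}$, and $(W/U)^{1-\frac1n}=k^{1-\frac1n}$. So the missing idea is not a higher-moment count but the reorganization of H\"older around the two-variable multiplicity function, with your three-variable substitution demoted to an internal tool for bounding its second moment.
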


As a consequence of Theorem \ref{thm1} for $k=1,$ we have the following corollary.
\bigskip
\begin{corollary}
Let $f\in \F_p[x,y]$ be a quadratic polynomial defined as in Theorem \ref{thm1}.  Assume that $\U, \V, \T \subset \mathbb{F}_p^*$ with $|\U-\V|\sim U,$ $U\sim V\sim N,$ and $T\ge p^\epsilon$ for some $\epsilon>0.$
Then the following statements hold:
\begin{enumerate}
\item Suppose that $p^{\frac{1}{3}+\epsilon'}\ll N\ll p^{\frac{2}{3}}$ for some $\epsilon'>0$ and $n> \lfloor 1/2\epsilon\rfloor+1.$ Then we have 
\[|S_\chi(\T, \U, \V, \alpha, \beta, f)|\lesssim N^{2-\frac{3}{4n}} T p^{\frac{1}{4n}}.\]
\item Suppose that $N\gg p^{2/3}$ and $n> \lfloor 1/2\epsilon\rfloor+1.$ Then we have 
\[|S_\chi(\T, \U, \V, \alpha, \beta, f)|\lesssim \frac{N^2T}{p^{1/4n}}.\]
\end{enumerate}
\end{corollary}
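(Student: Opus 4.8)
The plan is to derive this corollary directly from Theorem~\ref{thm1} by specializing to $k=1$ and $U\sim V\sim N$, so that the work reduces to identifying the dominant term in each of the two-term factors produced by that theorem. First I would determine which case of Theorem~\ref{thm1} applies. The dichotomy there is governed by the size of $V^2|\U-\V|$ relative to $p^2$, and under the hypotheses $|\U-\V|\sim U\sim V\sim N$ one has $V^2|\U-\V|\sim N^3$. Thus the condition $V^2|\U-\V|\ll p^2$ is equivalent to $N\ll p^{2/3}$: the range $p^{1/3+\epsilon'}\ll N\ll p^{2/3}$ in part (1) falls under the first case of Theorem~\ref{thm1}, while $N\gg p^{2/3}$ in part (2) falls under the second.

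Next I would simplify the prefactor in each case after setting $k=1$ and $U\sim V\sim N$. In the first case the factor $k^{3/4n}\,UV\,U^{-1/4n}V^{-1/2n}+k^{1/n}\,UV\,V^{-1/n}$ collapses to $N^{2-3/4n}+N^{2-1/n}$, and since $3/4n<1/n$ the first summand dominates, giving a prefactor of size $\sim N^{2-3/4n}$. In the second case the factor $k^{1/n}\,UV\,p^{-1/2n}+k^{1/n}\,UV\,V^{-1/n}$ becomes $N^2p^{-1/2n}+N^{2-1/n}$; the ratio of the two summands equals $(N^2/p)^{1/2n}$, which exceeds $1$ exactly because $N\gg p^{2/3}$ forces $N^2\gg p^{4/3}\gg p$, so the prefactor is $\sim N^2p^{-1/2n}$.

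It then remains to simplify the factor depending on $T$. Here I would invoke the hypotheses $T\ge p^\epsilon$ and $n>\lfloor 1/2\epsilon\rfloor+1$. The latter forces $n\ge 2$, so the relevant factor is $Tp^{1/4n}+T^{1/2}p^{1/2n}$; dividing through by $T^{1/2}p^{1/4n}$ reduces the comparison to $T^{1/2}$ against $p^{1/4n}$. Since $n>1/(2\epsilon)$ gives $1/4n<\epsilon/2$, and $T^{1/2}\ge p^{\epsilon/2}$, the term $Tp^{1/4n}$ dominates, so this factor is $\sim Tp^{1/4n}$. Multiplying the two simplified factors then yields $N^{2-3/4n}\,Tp^{1/4n}$ in part (1) and $N^2p^{-1/2n}\cdot Tp^{1/4n}=N^2Tp^{-1/4n}$ in part (2), which are exactly the claimed estimates, the logarithmic factor hidden in $\lesssim$ being inherited from Theorem~\ref{thm1}.

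The derivation presents no genuine difficulty beyond bookkeeping; the single point that requires care is checking that the hypothesis $n>\lfloor 1/2\epsilon\rfloor+1$ is precisely what guarantees $p^{1/4n}\le T^{1/2}$, since this is the inequality that collapses the two-term $T$-factor of Theorem~\ref{thm1} into the single clean term appearing in the corollary. (I note that the lower bound $N\gg p^{1/3+\epsilon'}$ in part (1) is not needed for the estimate itself but only to ensure it is non-trivial.)
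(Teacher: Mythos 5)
Your derivation is correct and is exactly the intended one: the paper presents this corollary as an immediate specialization of Theorem~\ref{thm1} with $k=1$ and $U\sim V\sim N$, and your case selection via $V^2|\U-\V|\sim N^3$ versus $p^2$, the identification of the dominant summands, and the use of $n>\lfloor 1/2\epsilon\rfloor+1$ to reduce the $T$-factor to $Tp^{1/4n}$ all match what the paper relies on. Your closing remark that the lower bound $N\gg p^{1/3+\epsilon'}$ is only needed for non-triviality of the estimate is also accurate.
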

\bigskip
The rest of this paper is organized as follows: in Section $2$ we prove Theorem \ref{thm2}, and in Section $3$ we present the proof of Theorem \ref{thm1}.

\section{Proof of Theorem \ref{thm2}}
The following result is our main step in the proof of Theorem \ref{thm2}. This is the unbalanced energy version of Theorem $1.1$ in \cite{thang}. 
\bigskip
\begin{theorem}\label{thm12}
Suppose that $f\in \F_p[x,y,z]$ is a quadratic polynomial which depends on each variable and which does not take the form $g(h(x)+k(y)+l(z)).$ For $\U, \V, \W\subset \mathbb{F}_p^*$ with $UVW\ll p^2,$ let $E$ be the number of tuples $(u, v, w, u', v', w')\in (\U\times \V\times \W)^2$ such that $f(u, v, w)=f(u', v', w').$ Then we have 
\[E\ll (UVW)^{3/2}+ \max\{V^2W^2, V^2U^2, U^2W^2\}.\]
\end{theorem}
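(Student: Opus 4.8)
The plan is to read $E$ as a collision count and reduce it to a point--plane incidence problem in $\mathbb{F}_p^3$, to which Rudnev's point--plane incidence bound applies; the hypothesis $UVW\ll p^2$ is exactly the admissible range of that bound. I would first record
\[
E=\#\{(\mathbf x,\mathbf y)\in B^2:\ f(\mathbf x)=f(\mathbf y)\}=\sum_{\lambda}r(\lambda)^2,\qquad B=\U\times\V\times\W,
\]
where $r(\lambda)=\#\{\mathbf x\in B:\ f(\mathbf x)=\lambda\}$. Bounding $E$ amounts to showing that the level sets of $f$ are spread out, which is where the shape hypothesis on $f$ enters.

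The core step is to convert the ``same--function'' relation $f(\mathbf x)=f(\mathbf y)$ into a genuine point--plane incidence. Since $f$ is not a sum of univariate polynomials, its quadratic part has a cross term that couples two of the variables; because the target estimate is symmetric in $U,V,W$, I may fix which cross term this is. Using that coupling monomial as the bilinear pairing, and folding the surviving square and linear terms into auxiliary coordinates, I would build a point set $P$ and a plane set $\Pi$ in $\mathbb{F}_p^3$, each of cardinality $\sim UVW$ and each an affine image of the product set $\U\times\V\times\W$, so that incidence of a point with a plane is equivalent to $f(\mathbf x)=f(\mathbf y)$; this gives $E\ll I(P,\Pi)$. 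Applying Rudnev's theorem in the range $|P|=|\Pi|\sim UVW\ll p^2$ yields
\[
I(P,\Pi)\ll (UVW)^{3/2}+k\,UVW,
\]
where $k$ is the largest number of collinear points of $P$. Since $P$ is an affine copy of a Cartesian product, a line meets it in at most $\max\{U,V,W\}$ points, and
\[
k\,UVW\le \max\{U,V,W\}\cdot UVW\le \max\{U^2V^2,\,V^2W^2,\,U^2W^2\},
\]
which produces the second term and completes the estimate.

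The main obstacle is the reduction in the core step: a priori $f(\mathbf x)=f(\mathbf y)$ is a point--quadric, not a point--plane, relation, and one must manufacture a bilinear structure using only $\sim UVW$ points and $\sim UVW$ planes while preserving the product structure needed to bound $k$. This is exactly where the exclusion of the form $g(h(x)+k(y)+l(z))$ is indispensable. The non-diagonality (i.e. $f$ is not a sum of univariate polynomials) supplies the cross term used for the pairing; the fact that $f$ is not a polynomial in a single linear form guarantees that the resulting planes do not all lie in one pencil. Were $f$ of the excluded shape, its level sets would be unions of parallel planes, the planes $\Pi$ would collapse into a single pencil, $k$ would become comparable to $UVW$, and $E$ would degenerate to nearly trivial size; ruling this out is what makes the incidence bound nontrivial.

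I expect the remaining work to be routine but bookkeeping-heavy: a finite case analysis according to which square and cross coefficients of $f$ vanish (dictating how the auxiliary coordinates absorb the non-bilinear terms while the ambient dimension stays three), together with separate, easy estimates for the degenerate loci where the bilinearization breaks down -- for instance where a coefficient used to solve for one variable vanishes. Each such locus is lower-dimensional and should be shown directly to contribute at most $\max\{U^2V^2,V^2W^2,U^2W^2\}$, so that it is absorbed into the second term of the stated bound.
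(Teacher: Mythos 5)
Your proposal follows essentially the same route as the paper: interpret $E$ as an energy, use the forced cross term to recast $f(\mathbf x)=f(\mathbf y)$ as a point--plane incidence between two sets of size $\sim UVW$, apply Rudnev's bound with the collinearity parameter $k\ll\max\{U,V,W\}$ coming from the Cartesian-product structure (via projection to the first two coordinates), and treat the degenerate loci --- where the point/plane parametrization has high multiplicity --- separately so that they are absorbed into the $\max\{U^2V^2,V^2W^2,U^2W^2\}$ term; this is exactly the paper's case analysis, with the degenerate incidences controlled there by the K\H{o}v\'ari--S\'os--Tur\'an lemma. The plan is sound; the only imprecision is that the point set is not literally an affine image of $\U\times\V\times\W$ (its third coordinate is quadratic), but the collinearity bound you need survives via the projection argument, as in the paper.
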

\begin{proof}
Let $f(x,y,z)$ be a quadratic polynomial that is not of the form $g(h(x)+k(y)+l(z)).$ Then $f$ has at least one of the mixed terms $xy, yz, xz,$ because otherwise $f$ would be in the form of $h(x)+k(y)+l(z).$ Moreover, 
we may assume that $f$ does not have any constant term, because the value $E$ is independent of the constant term in $f(x,y,z).$ Therefore, we may assume that $f(x,y,z)=axy+bxz+cyz+ r(x)+s(y)+t(z)$ where one of $a,b,c\in \mathbb F_p$ is not zero, and $r,s,t$ are polynomials in one variable with degree at most two and no constant terms. Furthermore, from the symmetric property of $f(x,y,z)$ we only need to prove Theorem \ref{thm12} for the following three cases:

\textbf{Case 1:} $f(x,y,z)=axy+bxz+ r(x)+s(y)+t(z)$ with $a\ne 0$ and $\deg(t)=2.$\\
\textbf{Case 2:} $f(x,y,z)=axy+bxz+ r(x)+s(y)+t(z)$ with $a\ne 0$ and $\deg(t)=1.$\\
\textbf{Case 3:} $f(x,y,z)=axy+bxz+ r(x)+s(y)$ with $a, b\ne 0.$ \\
\textbf{Case 4:} $f(x,y,z)=axy+ bxz +cyz+ r(x) + s(y)+t(z)$ with $a,b,c\ne 0.$

Notice that if one or two of the three mixed terms does not appear in the polynomial $f(x,y,z)$ (i.e. \textbf{Case 1, 2} or \textbf{3}), then the statement of Theorem \ref{thm12} follows immediately from Lemma \ref{lem:expandingwithoutyz}, \ref{lem:expandingwithoutyz1} and \ref{missingcase}  below. On the other hand, if the polynomial $f(x,y,z)$ has all the three mixed terms (i.e. \textbf{Case 4}), then Theorem \ref{thm12} is a direct consequence of Lemma \ref{lem:expandingwithyz}. Hence, the proof of Theorem \ref{thm12} is complete if we have the following four lemmas whose proofs will be given in the subsection below.

\begin{lemma}\label{lem:expandingwithoutyz}
Let $f(x,y,z) = axy+ bxz + r(x) + s(y)+t(z)$
be a quadratic polynomial in $\mathbb{F}_p[x, y, z]$ that depends on each variable with $a\ne 0$ and $\deg(t)=2.$ If $\U, \V,\W\subset \F_p^*$  with  $UVW\ll p^2,$ then we have 
\[E\ll (UVW)^{3/2}+\max\{U, V\} (UVW),\]
where $E$ denotes the number of tuples $(x, y, z, x', y', z')\in (\U\times \V\times \W)^2$ such that $f(x, y, z)=f(x', y', z').$
\end{lemma}
\bigskip
\begin{lemma}\label{lem:expandingwithoutyz1}
Let $f(x,y,z) = axy+ bxz + r(x) + s(y)+t(z)$
be a quadratic polynomial in $\mathbb{F}_p[x, y, z]$ that depends on each variable with $a\ne 0$ and $\deg(t)=1.$ Then for  $\U, \V,\W\subset \F_p^*$  with  $UVW\ll p^2,$ we have 
\[E\ll (UVW)^{3/2}+ \max\{V^2W^2, V^2U^2, U^2W^2\},\]
where $E$ is the number of tuples $(x, y, z, x', y', z')\in (\U\times \V\times \W)^2$ such that $f(x, y, z)=f(x', y', z').$
\end{lemma}
\bigskip

\begin{lemma}\label{missingcase}
Let $f(x,y,z) = axy+ bxz + r(x) + s(y)$
be a quadratic polynomial in $\mathbb{F}_p[x, y, z]$ that depends on each variable with $a, b\ne 0.$ Then for  $\U, \V,\W\subset \F_p^*$  with  $UVW\ll p^2,$ we have 
\[E\ll (UVW)^{3/2}+\max\{U, V\} (UVW),\]
where $E$ is the number of tuples $(x, y, z, x', y', z')\in (\U\times \V\times \W)^2$ such that $f(x, y, z)=f(x', y', z').$
\end{lemma}
\bigskip

\begin{lemma}\label{lem:expandingwithyz}
Let $f(x,y,z) = axy+ bxz +cyz+ r(x) + s(y)+t(z)$ be a quadratic polynomial in $\mathbb{F}_p[x, y, z]$ with $a, b, c\ne 0$ which depends on each variable and which does not take the form $g(h(x)+k(y)+l(z)).$ If  $\U, \V, \W\subset \F_p^*$ with $UVW\ll p^2,$ then  
\begin{align*}
E\ll (UVW)^{3/2}+ \max\{V^2W^2, V^2U^2, U^2W^2\},
\end{align*}
where  $E$ denotes the number of tuples $(x, y, z, x', y', z')\in (\U\times \V\times \W)^2$ such that $f(x, y, z)=f(x', y', z').$ 
\end{lemma}
\end{proof}

\subsection*{Proofs of Lemmas \ref{lem:expandingwithoutyz}, \ref{lem:expandingwithoutyz1}, \ref{missingcase}, and \ref{lem:expandingwithyz}}
In order to estimate the energy $E$ given in four lemmas above, we use 
the point-plane incidence bound due to Rudnev \cite{R}. A short proof can be found in \cite{frank}.
\bigskip
\begin{theorem}[Rudnev]\label{thm:rudnev}
Let $\RR, \S$ denote a set of points in $\F_p^3$ and a set of planes in $\mathbb{F}_p^3,$ respectively. Suppose that $|\RR|\ll |\S|$ and $|\RR|\ll p^2.$ In addition, assume that  there is no line that contains $k$ points of $\RR$ and is contained in $k$ planes of $\S.$
Then we have
\[ \I(\RR,\S):=|\{(p,\pi): p\in \RR, \pi\in  \S\}| \ll |\RR|^{1/2}|\S| +k|\S|.\] 
\end{theorem}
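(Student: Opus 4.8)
The plan is to reduce the point--plane count to a weighted count over lines, and then to resolve that count through the algebraic geometry of lines in three-space. The first step is routine: writing $\I(\RR,\S)=\sum_{\pi\in\S}|\RR\cap\pi|$ and applying Cauchy--Schwarz gives
$$\I(\RR,\S)^2\le |\S|\sum_{\pi\in\S}|\RR\cap\pi|^2=|\S|\Big(\I(\RR,\S)+\Sigma\Big),\qquad \Sigma:=\sum_{\ell}a_\ell(a_\ell-1)\,b_\ell,$$
where the last sum runs over all lines $\ell$ in $\F_p^3$, $a_\ell:=|\RR\cap\ell|$ counts the points of $\RR$ on $\ell$, and $b_\ell:=|\{\pi\in\S:\ell\subset\pi\}|$ counts the planes of $\S$ through $\ell$; the identity holds because an off-diagonal pair of points $p\neq p'$ spans a unique line, and a plane contains both points iff it contains that line. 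Since $|\RR|\gg 1$, the target bound $\I(\RR,\S)\ll|\RR|^{1/2}|\S|+k|\S|$ follows by solving the resulting quadratic inequality, provided I can establish the key estimate $\Sigma\ll |\RR|\,|\S|+k^{2}|\S|$.

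The estimate on $\Sigma$ carries the entire content of the theorem, and it is exactly here that the hypothesis with parameter $k$ is used: it asserts that no line is simultaneously rich in points ($a_\ell\ge k$) and rich in planes ($b_\ell\ge k$), so every line satisfies $\min(a_\ell,b_\ell)<k$. What makes this hard is that the obvious splitting of $\Sigma$ according to which of $a_\ell,b_\ell$ is the small one is far too lossy (it only returns bounds of the shape $k|\RR|^2$), so a genuine incidence-geometric input is unavoidable. The mechanism I would use is the Pl\"ucker--Klein correspondence: lines of $PG(3,\F_p)$ embed bijectively as the points of the Klein quadric $\mathcal K\subset PG(5,\F_p)$, the pencils of lines through a fixed point and the pencils of lines inside a fixed plane become the two families of maximal (two-dimensional) flats ruling $\mathcal K$, and two lines of $PG(3,\F_p)$ meet precisely when the chord joining their images lies on $\mathcal K$. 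Under this dictionary the configurations counted by $\Sigma$ become incidences between a point set on $\mathcal K$ (the images of the lines spanned by $\RR$) and the ruling flats attached to $\S$, so the whole problem is transported to counting on a single fixed quadric.

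To bound those incidences I would run a second Cauchy--Schwarz on $\mathcal K$, controlled by two structural facts: a smooth quadric in $PG(5,\F_p)$ carries $\sim p^{4}$ points, and any line not lying on $\mathcal K$ meets it in at most two points. These force the point-flat incidences to obey a Szemer\'edi--Trotter-type inequality whose main term yields $|\RR|\,|\S|$; the only configurations that evade this are the flats along which the images of $\RR$ concentrate, and by the correspondence these are exactly the lines of $PG(3,\F_p)$ that lie in many planes of $\S$ while carrying many points of $\RR$. That degenerate regime is precisely what the $k$-hypothesis forbids, so its total contribution is capped by the residual term $k^{2}|\S|$.

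The main obstacle, and the reason this is a theorem rather than an exercise, is that over $\F_p$ there is no Euclidean polynomial partitioning with which to decompose space cell by cell; the counting must be powered entirely by the internal algebra of $\mathcal K$. Concretely, the delicate points are (i) classifying and uniformly bounding the flats of lines lying inside $\mathcal K$ --- these are the pencils and stars of $PG(3,\F_p)$ and are the only sources of concentration --- and (ii) ensuring that the quadric-point count stays in its generic range, which is exactly why the hypotheses $|\RR|\ll|\S|$ and $|\RR|\ll p^{2}$ are imposed: they keep the number of relevant points well below the $\sim p^{4}$ points of $\mathcal K$, so that no wrap-around degeneracy of the field spoils the second-moment estimate. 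Matching the contribution of these quadric-internal flats exactly to the parameter $k$ is the technical heart of the argument.
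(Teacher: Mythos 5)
You should note first that the paper does not prove this statement at all: Theorem \ref{thm:rudnev} is imported verbatim from Rudnev \cite{R}, with a pointer to de Zeeuw's short proof \cite{frank}, so your sketch has to be measured against those arguments. Your opening reduction is correct and matches their opening: Cauchy--Schwarz plus the observation that an off-diagonal pair of points spans a unique line gives $\I(\RR,\S)^2\le |\S|\left(\I(\RR,\S)+\Sigma\right)$ with $\Sigma=\sum_\ell a_\ell(a_\ell-1)b_\ell$, and the theorem would indeed follow from the estimate $\Sigma\ll |\RR||\S|+k^2|\S|$ you isolate. The Pl\"ucker--Klein dictionary is also essentially right (modulo terminology: the lines through a fixed point form a \emph{star}, mapping to a plane of one ruling family; the lines inside a fixed plane map to a plane of the other family; a \emph{pencil} maps to a line on $\mathcal{K}$).

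The genuine gap is that the one estimate carrying all the content is asserted, not proved, and the mechanism you propose for it would fail. You want to bound incidences between a weighted point set on the Klein quadric $\mathcal{K}$ and the $|\S|$ planes of the ruling family attached to $\S$ using two facts: $|\mathcal{K}|\sim p^4$ and that a line not on $\mathcal{K}$ meets it in at most two points. But the planes you must handle lie \emph{entirely inside} $\mathcal{K}$, so every line of such a plane is a line of $\mathcal{K}$ and the two-point fact is vacuous exactly where it is needed; no Szemer\'edi--Trotter-type inequality follows from these two facts alone. Moreover, the ``second Cauchy--Schwarz'' in the weighted setting produces the fourth-moment quantity $\sum_\ell a_\ell^2(a_\ell-1)^2$, which the hypotheses do not control. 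The actual proofs do something structurally different: they convert the problem into counting pairs of \emph{intersecting lines}, i.e., incidences between Klein points and \emph{tangent hyperplane} sections $T_x\cap\mathcal{K}$, and estimate these by a point--hyperplane bound of Vinh/eigenvalue type (or Rudnev's direct count of points on quadric sections), which produces a main term plus a $p$-dependent error; the hypothesis $|\RR|\ll p^2$ is used quantitatively there to make the main term dominate, not merely to keep the point count ``below $p^4$.'' Likewise the $k$-hypothesis enters through an explicit split of the lines according to whether $a_\ell<k$ or $b_\ell<k$, interleaved with that incidence bound --- a na\"ive split, as you observe yourself, loses a factor of order $k|\RR|^2$, and your sketch never explains how the quadric geometry repairs this. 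As it stands, your proposal is a faithful road map of the known proof's architecture, but the step you defer to ``the technical heart'' is precisely the theorem.
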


We also need the following Lemma.
\bigskip
\begin{lemma}[K\H ovari--S\'os--Tur\'an theorem, \cite{bo}]\label{lmx9}
Let $G=(A\cup B, E(G))$ be a $K_{2,t}$-free bipartite graph. Then the number of edges between $A$ and $B$ is bounded by 
 \[|E(G)|\ll t^{1/2}|A||B|^{1/2}+|B|.\]
\end{lemma}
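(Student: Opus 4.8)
The plan is to reduce the energy count $E$ to a point--plane incidence problem in $\F_p^3$, apply Rudnev's theorem (Theorem \ref{thm:rudnev}) for the main term, and use the K\H ovari--S\'os--Tur\'an bound (Lemma \ref{lmx9}) to control the degenerate part of the count.

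First I would normalize. Dropping the constant term (which does not affect $E$), write $f(X)=X^{T}QX+L\cdot X$ with $X=(u,v,w)$, where $Q$ is the symmetric matrix whose off-diagonal entries are $a/2,b/2,c/2$ (all nonzero) and whose diagonal records the leading coefficients of $r,s,t$, and $L=(r_1,s_1,t_1)$. The key structural observation is that a quadratic $f$ has the shape $g(h(x)+k(y)+l(z))$ precisely when its quadratic part is a perfect square, i.e.\ when $\operatorname{rank}Q\le 1$; hence the hypothesis that $f$ is \emph{not} of this form, together with $a,b,c\ne 0$, forces $\operatorname{rank}Q\ge 2$, and in the principal subcase $\deg r,\deg s,\deg t\le 1$ one computes $\det Q=abc/4\ne 0$, so $Q$ is invertible. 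This non-degeneracy is exactly what prevents the incidence structure from collapsing.

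Next I would linearize via a difference substitution. Since $p$ is odd, writing $X'=X-\delta$ and using that $f$ is quadratic gives the exact identity $f(X)-f(X-\delta)=(2QX+L)\cdot\delta-\delta^{T}Q\delta$, so for each fixed difference $\delta$ the equation $f(X)=f(X')$ becomes the single linear condition
\[
2(Q\delta)\cdot X=\delta^{T}Q\delta-L\cdot\delta,
\]
that is, $X$ lies on a plane $\pi_\delta$ with normal vector $Q\delta$. Thus $E$ equals the number of pairs $(X,\delta)$ with $X,\,X-\delta\in\U\times\V\times\W$ and $X\in\pi_\delta$. Taking the point set $\RR=\U\times\V\times\W$ (so $|\RR|=UVW\ll p^2$, meeting Rudnev's hypothesis) and the planes $\pi_\delta$ as $\S$, Theorem \ref{thm:rudnev} yields $E\ll|\RR|^{1/2}|\S|+k|\S|\ll (UVW)^{3/2}+k\,(UVW)$, which produces the desired leading term $(UVW)^{3/2}$ once the effective number of planes is controlled by the membership constraint $X-\delta\in\U\times\V\times\W$ (handled by a dyadic decomposition according to the multiplicity $\#\{X:X,\,X-\delta\in\U\times\V\times\W\}$, whose total over $\delta$ is $(UVW)^2$).

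The main obstacle is the degenerate term $k\,(UVW)$, i.e.\ bounding the contribution of rich lines---lines carrying $\ge k$ points of the grid $\RR$ and contained in $\ge k$ of the planes $\pi_\delta$. These correspond to the degenerate solutions: when $\operatorname{rank}Q=2$ the offending $\delta$ lie along the one-dimensional kernel direction of $Q$, and in every case the rich lines are, after the change of variables, axis-parallel lines of the product set, along which two of the three coordinates vary freely. I would encode these degenerate solutions as the edges of a bipartite graph and argue that it is $K_{2,t}$-free for a bounded constant $t$---this is where $a,b,c\ne 0$ and the exclusion of the form $g(h(x)+k(y)+l(z))$ (equivalently $\operatorname{rank}Q\ge 2$) are essential, since they guarantee that two generic degenerate solutions pin down the configuration. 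Lemma \ref{lmx9} then bounds the number of such solutions by $\ll\max\{V^{2}W^{2},V^{2}U^{2},U^{2}W^{2}\}$, the product of the squares of the two largest of $U,V,W$. Combining the Rudnev main term with this K\H ovari--S\'os--Tur\'an estimate gives $E\ll (UVW)^{3/2}+\max\{V^{2}W^{2},V^{2}U^{2},U^{2}W^{2}\}$, as claimed; verifying the $K_{2,t}$-freeness is the delicate step on which the whole argument turns.
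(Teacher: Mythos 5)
You have not proved the statement you were asked to prove. Lemma \ref{lmx9} is the K\H ovari--S\'os--Tur\'an theorem, a purely combinatorial fact about $K_{2,t}$-free bipartite graphs; the paper does not prove it from scratch either, but quotes it from Bollob\'as \cite{bo}. What you wrote is instead an outline of the proof of Theorem \ref{thm12} (the energy bound $E\ll (UVW)^{3/2}+\max\{V^2W^2,V^2U^2,U^2W^2\}$), and --- fatally for the present task --- your outline \emph{uses} Lemma \ref{lmx9} as a black box to control the degenerate incidences. As a proof of Lemma \ref{lmx9} this is circular, and nowhere in your text do the actual hypotheses of the lemma (a bipartite graph containing no $K_{2,t}$) play any role: there is no counting of edges, degrees, or common neighbourhoods anywhere in the argument.

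For the record, the intended argument is a short double count. Suppose no two distinct vertices of $A$ have $t$ common neighbours in $B$, and let $d_b$ denote the degree of $b\in B$. Counting in two ways the configurations $(\{a,a'\},b)$ with $a\ne a'$ both adjacent to $b$ gives $\sum_{b\in B}\binom{d_b}{2}\le t\binom{|A|}{2}\ll t|A|^2$. Since $x\mapsto \binom{x}{2}$ is convex, Jensen's inequality yields $|B|\binom{\bar d}{2}\le \sum_{b\in B}\binom{d_b}{2}$ with $\bar d=|E(G)|/|B|$, whence $\bar d(\bar d-1)\ll t|A|^2/|B|$, so $\bar d\ll t^{1/2}|A|/|B|^{1/2}+1$, i.e.\ $|E(G)|\ll t^{1/2}|A||B|^{1/2}+|B|$, as claimed. (Note the asymmetry: the bound in this exact form corresponds to forbidding $t$ common neighbours for pairs of vertices in $A$; exchanging the roles of $A$ and $B$ gives instead $t^{1/2}|A|^{1/2}|B|+|A|$, which is the orientation one must keep straight when the lemma is applied to the point set $\RR_1$ and plane set $\S_2$ in the paper.)
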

\paragraph{Proof of Lemma \ref{lem:expandingwithoutyz}}
Let $E$ be the number of tuples $(x,y,z,x',y',z')\in (\U\times \V\times \W)^2$ such that
$ f(x,y,z) = f(x',y',z'),$ where the quadratic polynomial $f$ takes the form in \textbf{Case 1}. 
This implies that
\[ ayx- ax'y' +(bxz +r(x) + t(z)- s(y'))
= bx'z'+r(x')+t(z') - s(y).\]
This relation can be viewed as an incidence between the point $(x, y', bxz +r(x)+ t(z)- s(y'))$ in $\mathbb{F}_p^3$ and the plane defined by $ay X -ax'Y  +Z = bx'z'+r(x')+t(z') - s(y).$ Let $\RR$ be the following point set:
\[\RR: = \{(x, y', bxz +r(x)+ t(z)- s(y')) : (x,y',z)\in \U\times \V\times \W \} \subset \mathbb{F}_p^3,\]
and $\S$ be the following plane set
\[\S:=\{ay X -ax'Y  +Z = bx'z'+r(x')+t(z') - s(y) :
(x',y,z')\in \U\times \V\times \W
\}.\]
For each fixed $(u,v,w)\in \RR,$ at most two elements $(x,y',z)$ in $\U\times \V\times \W$ reproduce the $(u,v,w),$ because $\deg(t)=2.$ In fact, we can take $x=u, y'=v,$ and $z$ values are  solutions to 
$$ t(z)+buz+r(u)-s(v)=w.$$
By the same argument, we see that each fixed plane in $\S$ can be determined by at most two elements $(x',y,z')\in \U\times \V \times \W.$ Also notice that each element in $\U\times \V \times \W$ determines a point in $\RR$ and  a plane in $\S.$ Hence, we have that 
$$ |\RR| \sim  |\S| \sim  UVW \quad \mbox{and}\quad E\sim \I(\RR,\S).$$

This shows that our problem is reducing to estimate of $\I(\RR,\S).$ To bound this, we apply Rudnev's point-plane incidence theorem. 
Since $|\RR|\sim UVW,$  the condition $|\RR|\ll p^2$ in Theorem \ref{thm:rudnev} is clearly satisfied from our assumption that $ UVW \ll p^2.$ 
Now, we count the number of collinear points in $\RR.$ Let $\RR'$ be the projection of $\RR$ onto the first two coordinates. It is clear that $\RR'=\U\times \V.$ Thus any line contains at most $\max\{U, V\}$ points unless it is vertical. In the case of vertical lines, we can see that no plane in $\S$ contains such lines, because the $z$-coordinate of normal vectors of planes in $\S$ is one. Therefore, we can apply Theorem \ref{thm:rudnev} with $k=\max\{U, V\}.$ In other words, we obtain 
\[E\ll (UVW)^{3/2}+ \max\{U, V\} (UVW).\]
This completes the proof of Lemma \ref{lem:expandingwithoutyz}. $\square$. 

\paragraph{Proof of Lemma \ref{lem:expandingwithoutyz1}}
Since $\deg(t)=1,$ without loss of generality, we assume that $t(z)=mz$ for some $m\in \mathbb{F}_p^*$ and so
$f(x,y,z) = axy+ bxz + r(x) + s(y)+mz.$ As in the proof of Lemma \ref{lem:expandingwithoutyz}, we define 
the set $\RR$ of points and the set $\S$ of planes as follows:

\[\RR: = \{(x, y', bxz +r(x)+ mz- s(y')) : (x,y',z)\in \U\times \V\times \W \} \subset \mathbb{F}_p^3,\]
\[\S:=\{ay X -ax'Y  +Z = bx'z'+r(x')+mz' - s(y) : (x',y,z')\in \U\times \V\times \W\}.\]
The only reason we need to prove Lemma \ref{lem:expandingwithoutyz1} is that if $u=-m/b\in \U,$ then the triples $(-m/b, v, w)\in \RR$ can be determined by many triples $(x, y', z)\in \U\times \V\times \W.$ For this case, we need to do some more technical steps. If $-m/b\not\in \U,$ then  Lemma \ref{lem:expandingwithoutyz1} follows immediately from the same argument as in the proof of Lemma \ref{lem:expandingwithoutyz}.
Thus we may assume that $u=-m/b\in \U.$ As above, we first need to estimate the sizes of $\RR$ and $\S.$ For $(u,v,w)\in \RR $ and $(x, y',z)\in \U\times \V \times \W,$ we consider the following system of three equations:
\[u=x, ~v=y', ~w= buz +r(u)+ mz- s(v).\]
If $u\in \U$ satisfies $bu=-m,$ i.e. $u=-m/b\in \U,$ then  we have
\begin{equation}\label{ShenEq} u=x, ~v=y', ~w=r(u)-s(v) \quad \mbox{for all}~~ z\in \W.\end{equation}
Let $\RR_1$ be the set of points $(u, v, w)\in \RR$ with $u=-m/b.$ Then $\RR_1$ is a set with $V$ points, since for any $v=y'\in \V,$ $w$ is determined uniquely.  By \eqref{ShenEq} and the definition of $\RR_1,$ notice that each point in $\RR_1$ is determined by $W$ triples $(x, y', z)\in \U\times \V\times \W.$ Let $\RR_2=\RR\setminus \RR_1.$ Also notice that each point in $\RR_2$ is determined by exactly one triple $(x, y', z)\in \U\times \V\times \W.$

By the similar argument, we can partition the set of planes $\S$ into two sets $\S_1$ and $\S_2$ with $\S_2=\S\setminus \S_1$ so that  $|\S_1|=V,$  each plane in $\S_1$ is determined by $W$ triples $(x', y, z')\in \U\times \V\times \W,$ and each plane in $\S_2$ is determined by exactly one triple $(x', y, z')\in \U\times \V\times \W.$ 

From the above observations, it follows that each incidence between $\RR_1$ and $\S_2,$ or between $\RR_2$ and $\S_1$ contributes to $E$ by $W,$ each incidence between $\RR_1$ and $\S_1$ contributes to $E$ by $W^2,$ and each incidence between $\RR_2$ and $\S_2$ contributes to $E$ by one. Namely, we have
$$  E \ll W^2\cdot \I(\RR_1,\S_1) + W\cdot \I(\RR_1,\S_2) + W\cdot \I(\RR_2,\S_1) +\I(\RR_2,\S_2).$$ 

Since $|\RR_1|=|\S_1|=V,$ it is clear that
\[I(\RR_1, \S_1)\ll  V^2.\]

To bound $I(\RR_2, \S_2),$ recall that each element of $\RR_2$ and $\S_2$ is determined by exactly one element  $(x,y,z) \in \U\times \V \times W$ with $x\ne -m/b.$ Hence, by the same argument as in the proof of Lemma \ref{lem:expandingwithoutyz}, we see that 
\[I(\RR_2, \S_2)\ll (UVW)^{3/2}+ \max\{U, V\} (UVW).\]
To bound $I(\RR_1, \S_2),$ we will use Lemma \ref{lmx9}.  
Let $G$ denote the bipartite graph with vertex sets $\S_2$ and $\RR_1$ such that there is an edge between a point in $\RR_1$ and  a plane in $\S_2$ if the point lies on the plane.
Since $|\RR_1|=V,$ each line contains at most $V$ points in $\RR_1,$ and so any two planes in $\S_2$ support at most $V$ points in common. Thus letting $A:=\RR_1$ and $B:=\S_2$ and applying Lemma \ref{lmx9}, we obtain that
\[I(\RR_1, \S_2)=|E(G)|\ll V^{1/2}V(UVW)^{1/2}+UVW = U^{1/2}W^{1/2}V^2+UVW.\]
Similarly, we also have 
\[I(\RR_2, \S_1)\ll U^{1/2}W^{1/2}V^2+UVW.\]
In other words, we have proved that 
\begin{align*}
E&\ll (UVW)^{3/2}+ \max\{U, V, W\} (UVW) + V^2W^2 + U^{1/2}V^2W^{3/2}\\&\ll (UVW)^{3/2}+ V^2W^2 + V^2 U^2 + U^2 W^2
\\&\ll(UVW)^{3/2}+ \max\{V^2W^2, V^2 U^2, U^2 W^2\}.
\end{align*}

This completes the proof of Lemma \ref{lem:expandingwithoutyz1}. $\square$.

\paragraph{Proof of Lemma \ref{missingcase}:}
Since $f(x,y,z) = axy+ bxz + r(x) + s(y)$ with $a, b\ne 0,$
as in the proof of Lemma \ref{lem:expandingwithoutyz}, we can define 
the set $\RR$ of points and the set $\S$ of planes as follows:

\[\RR: = \{(x, y', bxz +r(x)- s(y')) : (x,y',z)\in \U\times \V\times \W \} \subset \mathbb{F}_p^3,\]
\[\S:=\{ay X -ax'Y  +Z = bx'z'+r(x')- s(y) : (x',y,z')\in \U\times \V\times \W\}.\]
Since $b\ne 0$ and $\U\subset \mathbb F_p^*,$  we have
$$|\RR|=|\S|=UVW \quad \mbox{and} \quad E=\I(\RR,\S).$$
By the same argument as in the proof of Lemma \ref{lem:expandingwithoutyz}, we conclude that 
$$E\ll (UVW)^{3/2}+\max\{U, V\} (UVW),$$ 
as desired.
$\square$.

\paragraph{Proof of Lemma \ref{lem:expandingwithyz}:}
Now we would like to estimate $E$ which is the number of tuples $(x,y,z,x',y',z') \in (\U\times \V\times \W)^2$ satisfying the equation
\begin{equation}\label{saE} f(x,y,z)=f(x', y',z'),\end{equation}
where $f(x,y,z) = axy+ bxz +cyz+ r(x) + s(y)+t(z)$ is a quadratic polynomial in $\mathbb{F}_p[x, y, z]$ with $a, b, c\ne 0.$ Without loss of generality, we may assume that 
$$f(x,y,z)= axy+ bxz +cyz + dx^2+ey^2+gz^2 +hx +iy+jz,$$
where $a,b,c\ne 0$ and $d,e,g,h, i, j\in \mathbb F_q.$ 
We adapt the argument as in the proof of Lemma 2.3 in \cite{thang}.
Since the polynomial $f(x,y,z)$ is not in the form of $g(h(x)+k(y)+l(z)),$  one of the following equations  is not satisfied:
\begin{equation*}
4de=a^2,~ 4dg =b^2,~ 4eg =c^2,~  hc=ja=ib.
\end{equation*}

 Otherwise, we could write 
$$f =\left(\sqrt{d} x + \sqrt{e}y + \sqrt{g}z + \frac{h}{2\sqrt{d}}\right)^2 -\frac{h^2}{4d},$$ 
if all of $d, e, g$ are squares in $\mathbb F_q$. On the other hand, if all of $d,e,g$ are not squares in $\mathbb F_q$, we could write 
$$f =\frac{1}{deg} \left( d\sqrt{eg} x+ e\sqrt{dg}y + g\sqrt{de}z + \frac{h\sqrt{eg}}{2}\right)^2-\frac{h^2}{4d},$$
since the equations $4de=a^2, 4dg =b^2, 4eg =c^2$ imply that $de, dg, eg$ are squares in $\mathbb F_q,$ and $e, d, g$ are nonzeros.

By permuting the variables, we may assume that one of the following equations does not hold:
\begin{equation*}4eg=c^2,~ ib=ja. \end{equation*}

The equation \eqref{saE} is rewritten as
\begin{align*}
(ay+bz)x  -x' (ay'+bz')&+  d x^2 - e(y')^2 -cy'z'- g(z')^2+hx-iy'-jz' \\
&= 
d(x')^2-e y^2 -cyz -g z^2 +hx' -iy-jz .
\end{align*}
This relation can be viewed as an incidence between the point $(x, ay'+bz', 
 dx^2 - e (y')^2-cy'z'- g(z')^2 +hx-iy'-jz')$ in $\mathbb{F}_p^3$ and the plane defined by 
\[(ay+bz)X  -x' Y+ Z = 
d (x')^2-e y^2 -cyz- g z^2 +hx'-iy-jz.\]
Let $\RR$ be the following set of points
\[\RR = \{(x, ay'+bz', 
 dx^2 - e (y')^2-cy'z'- g(z')^2 +hx-iy'-jz') : (x,y',z')\in \U\times \V\times \W \} ,\]
and $\S$ be the following set of planes
\[ \S=\{ (ay+bz)X  -x' Y+ Z = 
d (x')^2-e y^2 -cyz- g z^2 +hx'-iy-jz :
(x',y,z)\in \U\times \V\times \W
\}.\]
It is clear that $E$ is bounded from above by the number of incidences between $\RR$ and $\S.$ In the next step, we estimate the sizes of $\RR$ and $\S.$ Indeed, for a given point $(u, v, w)\in \RR,$ we now count the number of triples $(x, y', z')\in \U\times \V\times \W$ such that 
\[u=x,~~~ v = ay'+bz', ~~~w = dx^2 - e (y')^2-cy'z'- g(z')^2 +hx-iy'-jz'.\]
These equations yield that
\[w = d u^2 - e (y')^2 -cy' \left(\frac{v-ay'}{b}\right) - g\left(\frac{v-ay'}{b}\right)^2 
+hu-iy'-j\left(\frac{v-ay'}{b}\right), \]
or
\[\left(b^2e-abc+a^2g\right)(y')^2  +\left(bcv - 2agv +ib^2 - jab\right)y' +\left(b^2w - b^2d u^2+g v^2 -b^2hu +bjv\right)= 0.\]
We consider the following two cases:

{\bf Case $1$:} If either $b^2e-abc+a^2g$ or $bcv-2agv+ib^2-jab$ is non-zero, then at most two solutions $y'$ of the above equation exist, and $z'$ value is determined in terms of $v$ and $y'.$ 

{\bf Case $2$:} If both $b^2e-abc+a^2g$ and $bcv-2agv+ib^2-jab$ are zero, then we will have the following system:
\begin{equation}\label{0911}
b^2e-abc+a^2g=0, ~~~
(bc - 2ag)v +(ib - ja)b=0,~~~
b^2w - b^2d u^2+g v^2 -b^2hu +bjv=0.\end{equation}
In this case, we need to do some more technical steps. 

In the case when $bc-2ag=0,$ the second equation above tells us that $ib=ja.$ Therefore, it follows from the first equation that $4eg=c^2,$ which contradicts  our assumptions at the beginning of the proof. 

Thus we must have $bc-2ag\ne 0.$ This gives us $v = -(ib^2-jab)/(bc-2ag).$ For this value of $v$ and any $u\in \U, w$ is determined uniquely by the third equation of (\ref{0911}). Therefore, there are at most $U$ points $(u, v, w)\in \RR$ which satisfy three equations above. We denote the set of these points by $\RR_2\subset \RR.$ Let $\RR_1=\RR\setminus \RR_2.$  We have $|\RR_2|=U$ and $|\RR_1|\sim UVW.$ Note that any point in $\RR_1$ corresponds to at most two points in $\U\times \V\times \W$ and any point in $\RR_2$ corresponds to at most $\max\{V, W\}$ points $(x,y',z')\in \U\times \V\times \W.$

Likewise, we can also show that the plane set $\S$ can be partitioned into two sets $\S_1$ and $\S_2,$ where each plane in $\S_1$ corresponds to at most two points in $\U\times \V\times \W,$ and each plane in $\S_2$ corresponds to at most $\max\{V, W\}$ points in $\U\times \V\times \W.$

Set $N:=\max\{V, W\}.$ We observe that an incidence between $\RR_2$ and  $\S_1,$ 
or between $\RR_1$ and $\S_2,$ contributes at most $N$ to $E,$ and an incidence between$\RR_2$ and $\S_2$ contributes at most $N^2$ to $E.$ Hence, we obtain
\begin{equation}\label{eq:split}
E \ll \I(\RR_1,\S_1) + N\cdot \I(\RR_1,\S_2) + N\cdot \I(\RR_2,\S_1) + N^2\cdot\I(\RR_2,\S_2).
\end{equation}
Since $|\RR_2|, |\S_2|\ll U,$ we have $\I(\RR_2,\S_2)\leq U^2.$ To bound $\I(\RR_1,\S_1),$ we will apply Theorem \ref{thm:rudnev}. Before doing this, we need to give an upper bound on the number of collinear points in $\RR.$

One can cover the set $\RR$ by $U$ planes defined by the equations $x=x_0, x_0\in U.$ By a direct computation, one can check that for each plane $x=x_0,$ the points of $\RR$ on this plane lie on either a line or a parabola, and for distinct $y'\in V,$ we have distinct parabolas or lines. 

If a line $l$ does not lie on any of those planes, then it intersects $\RR$ in at most $U$ points. Suppose that $l$ lies on the plane $x=x_0.$ Then there are two possibilities. If $l$ is the same as a line determined by some $y'\in V,$ then it contains $W$ points. If it is not that case, then $l$ supports at most $2V$ points from $\RR,$ since a line intersects a parabola or a line in at most two points. In other words, we can say that the maximal number of collinear points in $\RR$ is at most $U+2V+W.$ By Theorem \ref{thm:rudnev}, we have
\[\I(\RR_1,\S_1) \ll (UVW)^{3/2} + \max\{U, V, W\} (UVW). \]

To bound $I(\RR_1, \S_2)$ and  $I(\RR_2, \S_1),$  we use Lemma \ref{lmx9} again. Let $G$ be the bipartite graph with vertex sets $\S_2$ and $\RR_1$ such that there is an edge between a point and a plane if the point lies on the plane. We showed that no $\max\{U, V, W\}+1$ points of $\RR_1$ lie on a line. Hence, any two planes of $\S_2$ contain at most $\max\{U, V, W\}$ points of $\RR_1$ in common. Thus, we get
\[ \I(\RR_1,\S_2) = |E(G)| \ll (\max\{U, V, W\})^{1/2}\cdot U\cdot (UVW)^{1/2} + UVW .\]
Using a similar argument, we get
\[\I(\RR_2,\S_1)\ll (\max\{U, V, W\})^{1/2}\cdot U\cdot (UVW)^{1/2} + UVW.\]
Putting all bounds together, it follows from \eqref{eq:split} that
\begin{equation}\label{appdom}
E\ll (UVW)^{3/2} + M(UVW)+N M^{\frac{1}{2}} U^{\frac{3}{2}} V^{\frac{1}{2}} W^{\frac{1}{2}} + N (UVW) + N^2 U^2,
\end{equation}
where $N=\max\{V,W\}$ and $M=\max\{U, V, W\}.$
A direct computation shows that each of the second, third, fourth, and fifth terms in the RHS of the equation \eqref{appdom} is dominated by
$$ V^2W^2 + V^2 U^2 + U^2 W^2.$$ 
Hence, we have
 \begin{align*} E&\ll (UVW)^{3/2}+ V^2W^2 + V^2 U^2 + U^2 W^2\\
                 &\ll (UVW)^{3/2}+ \max\{V^2W^2, V^2 U^2, U^2 W^2\}, \end{align*}
which completes the proof of Lemma \ref{lem:expandingwithyz}. $\square$


In addition to Theorem \ref{thm12},  the following lemma also plays an important role in providing the complete proof of the first part of Theorem \ref{thm2}. 
\bigskip

\begin{lemma}[\cite{shpar}, Lemma $2.3$]\label{lm1754}
For $\T\subset\mathbb{F}_p^*$ with size $T$ and a sequence of weights $\alpha=(\alpha_t)_{t\in \T}$ with $\max_{t\in \T} |\alpha_t|\le 1,$ and for any fixed integer $n\ge 1,$ we have
\[ \sum_{\lambda\in \mathbb{F}_p}\left\vert \sum_{t\in \T}\alpha_t\chi(\lambda+t)\right\vert^{2n}\ll \begin{cases}Tp ~~\mbox{if}~~n=1\\ T^{2n}p^{1/2}+T^np ~~\mbox{if}~~n\ge 2.\end{cases}\]
\end{lemma}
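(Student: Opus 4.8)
The plan is to expand the $2n$-th moment, push the summation over $\lambda$ to the inside, and recognize the resulting inner sum as a complete multiplicative character sum of a one-variable rational function, which is governed by Weil's bound away from a small set of degenerate tuples. Writing $|z|^{2n}=z^{n}\overline{z}^{\,n}$ and $\overline{\chi(a)}=\chi(a)^{-1}$, I would first expand (with the convention $\chi(0)=0$, so that $\lambda$-values hitting a zero of the denominator simply drop out)
\[
\sum_{\lambda\in\mathbb{F}_p}\left|\sum_{t\in\T}\alpha_t\chi(\lambda+t)\right|^{2n}
=\sum_{t_1,\dots,t_n,\,t_1',\dots,t_n'\in\T}
\alpha_{t_1}\cdots\alpha_{t_n}\,\overline{\alpha_{t_1'}}\cdots\overline{\alpha_{t_n'}}
\sum_{\lambda}\chi\!\left(\frac{(\lambda+t_1)\cdots(\lambda+t_n)}{(\lambda+t_1')\cdots(\lambda+t_n')}\right).
\]
Since $|\alpha_t|\le1$, the outer coefficients are harmless, and the whole problem reduces to estimating the inner sum $\Sigma(\mathbf{t},\mathbf{t}'):=\sum_{\lambda}\chi\big(R_{\mathbf{t},\mathbf{t}'}(\lambda)\big)$, where $R_{\mathbf{t},\mathbf{t}'}(\lambda)=\prod_i(\lambda+t_i)\big/\prod_j(\lambda+t_j')$, for each pair of $n$-tuples.

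The second step is a dichotomy. Let $d=\operatorname{ord}(\chi)\ge2$, and for each value $s$ let $a_s,b_s$ be the multiplicities of $s$ among the $t_i$ and among the $t_j'$, respectively. Then $\chi(R_{\mathbf{t},\mathbf{t}'})$ is constant (equal to $1$ off the zeros and poles) exactly when every exponent $a_s-b_s$ is divisible by $d$, i.e.\ when $R_{\mathbf{t},\mathbf{t}'}$ is a constant times a $d$-th power; in that case $|\Sigma(\mathbf{t},\mathbf{t}')|\le p$. For every other pair, $R_{\mathbf{t},\mathbf{t}'}$ is \emph{not} a perfect $d$-th power, so Weil's bound for multiplicative character sums of rational functions gives $|\Sigma(\mathbf{t},\mathbf{t}')|\le(2n-1)\sqrt{p}\ll_n\sqrt{p}$, the degree being at most $2n$. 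As there are at most $T^{2n}$ pairs, the non-degenerate pairs contribute $\ll_n T^{2n}p^{1/2}$.

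The crux is to show that the degenerate (perfect-power) pairs number only $\ll_n T^{n}$, so that they contribute $\ll_n T^{n}p$ and the bound for $n\ge2$ follows. For this I would establish the combinatorial fact that, whenever $a_s\equiv b_s\pmod d$ for all $s$ with $\sum_s a_s=\sum_s b_s=n$ and $d\ge2$, the combined support $S=\{s:a_s>0\text{ or }b_s>0\}$ has at most $n$ elements. The mechanism is that a value with $a_s\not\equiv0\pmod d$ forces both $a_s\ge1$ and $b_s\ge1$ (a ``matched'' part $P$), while any value outside $P$ that appears at all must carry mass $\ge d$; balancing the two budgets $\sum a_s=\sum b_s=n$ yields $|S|\le|P|+2(n-|P|)/d\le n$ since $2/d\le1$. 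A degenerate pair is therefore determined by a choice of at most $n$ values from $\T$ together with bounded multiplicity and ordering data, giving the count $\ll_n T^{n}$, as needed.

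Finally, the case $n=1$ requires a small refinement, because there the Weil estimate only produces $T^{2}p^{1/2}$ for the off-diagonal, which is not $\ll Tp$. Instead I would evaluate the off-diagonal inner sum exactly: for $t\ne t'$ the map $\lambda\mapsto(\lambda+t)/(\lambda+t')$ is a M\"obius transformation carrying $\mathbb{F}_p\setminus\{-t'\}$ bijectively onto $\mathbb{F}_p\setminus\{1\}$, whence $\Sigma=\sum_{u\ne1}\chi(u)=-\chi(1)=-1$. Thus the off-diagonal totals at most $T^{2}$, and since $\T\subset\mathbb{F}_p^{*}$ forces $T<p$ we have $T^{2}<Tp$; together with the diagonal term $\ll Tp$ this gives the claimed $Tp$. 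The main obstacle throughout is precisely the degenerate-tuple count: one must rule out that a low-order character creates many ``accidental'' perfect powers, and the support bound $|S|\le n$ — which genuinely uses $d\ge2$, i.e.\ the nontriviality of $\chi$ — is what makes this control possible.
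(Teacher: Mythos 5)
The paper does not actually prove this lemma---it is imported verbatim from \cite{shpar} (Lemma 2.3 there)---so there is no internal proof to compare against. Your argument is correct and is essentially the standard proof of the cited result: expand the $2n$-th moment, apply the Weil bound to the non-degenerate tuples, count the degenerate tuples (those whose associated rational function is a constant times a $d$-th power) by the support bound $|S|\le n$, which you justify correctly and which gives the $O_n(T^n)$ count, and rescue the $n=1$ case by the exact M\"obius-transformation evaluation of the off-diagonal sums. The only point worth polishing is the bookkeeping around $\chi(0)=0$ (the product of the individual character values and $\chi$ of the reduced rational function can disagree at the $O_n(1)$ common zeros of numerator and denominator), but this contributes $O_n(T^{2n})$ in total and is absorbed by the stated bounds.
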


To prove the second part of Theorem \ref{thm2}, we use following point-plane incidence theorem due to Vinh (\cite{vinh}). \\

\begin{theorem}[\cite{vinh}, Theorem 5]\label{VinhInc} Suppose that $\RR$ is a collection of points in $\mathbb F_q^d,$  and $\S$ is a collection of hyperplanes in $\mathbb F_q^d,$ with $d\ge 2.$ Then we have
$$ \I(\RR,\S):=|\{(p,\pi): p\in \RR, \pi\in  \S\}| \ll \frac{|\RR||\S|}{q}+ q^{(d-1)/2} |\RR|^{1/2} |\S|^{1/2}. $$

\end{theorem}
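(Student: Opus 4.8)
The plan is to deduce this from spectral graph theory, via the expander mixing lemma applied to the bipartite point--hyperplane incidence graph over $\F_q^d$. First I would set up the bipartite graph $G = (X \cup Y, E)$ in which $X = \F_q^d$ is the set of all points, $Y$ is the set of all affine hyperplanes $\{x : a\cdot x = b\}$ (with $a \ne 0$, taken up to the scaling $(a,b)\sim(\lambda a,\lambda b)$), and a point is joined to a hyperplane exactly when it lies on it. A direct count gives $|Y| = q\frac{q^d-1}{q-1}$, every hyperplane carries $q^{d-1}$ points, and every point lies on $\frac{q^d-1}{q-1}$ hyperplanes, so $G$ is biregular of edge density $d_X/|Y| = 1/q$. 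This density is what produces the expected main term $\frac{|\RR||\S|}{q}$.

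The crux is to control the nontrivial singular values of the biadjacency matrix $M$ (rows indexed by points, columns by hyperplanes). The plan here is to compute $MM^T$ explicitly: its diagonal entry is the point-degree $\frac{q^d-1}{q-1}$, while for distinct points $r \ne r'$ the entry is the number of hyperplanes through both, namely $\frac{q^{d-1}-1}{q-1}$ (the admissible normals $a$ must be orthogonal to $r-r'$, and are counted up to scaling). Hence $MM^T = q^{d-1}I + \frac{q^{d-1}-1}{q-1}J$ with $J$ the all-ones matrix. Since $J$ has eigenvalues $q^d$ (simple) and $0$, the largest singular value of $M$ belongs to the all-ones direction, and every other singular value equals exactly $q^{(d-1)/2}$; this is the source of the $q^{(d-1)/2}$ in the estimate.

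Finally I would invoke the (biregular, bipartite) expander mixing lemma. Writing $\mathbf{1}_\RR = \frac{|\RR|}{q^d}\mathbf{1}_X + f$ and $\mathbf{1}_\S = \frac{|\S|}{|Y|}\mathbf{1}_Y + g$ with $f\perp\mathbf{1}_X$ and $g\perp\mathbf{1}_Y$, one has $\I(\RR,\S) = \mathbf{1}_\RR^T M \mathbf{1}_\S$. The constant parts contribute $\frac{|\RR||\S|}{q}$; the two cross terms vanish because $M\mathbf{1}_Y = d_X\mathbf{1}_X$ and $M^T\mathbf{1}_X = q^{d-1}\mathbf{1}_Y$ are multiples of the all-ones vectors; and the last term obeys $|f^T M g| \le \sigma_2\|f\|_2\|g\|_2 \le q^{(d-1)/2}|\RR|^{1/2}|\S|^{1/2}$, using $\|f\|_2^2 \le |\RR|$ and $\|g\|_2^2 \le |\S|$. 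Summing these yields the claimed bound with an absolute constant.

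The hard part will be the eigenvalue computation, and in particular getting the off-diagonal entry of $MM^T$ right while correctly accounting for the scaling redundancy in the parametrization $(a,b)$ of a hyperplane. It is worth noting that a purely Fourier-analytic route --- expanding the incidence indicator $\mathbf{1}[a\cdot r = b] = \frac{1}{q}\sum_t \psi(t(a\cdot r - b))$ and applying Cauchy--Schwarz --- runs straight into this same redundancy and tends to lose a spurious factor of $q^{1/2}$ unless one is delicate about the multiplicities; this is exactly why I favor the spectral computation, where $MM^T$ absorbs the parametrization automatically and delivers the sharp exponent $q^{(d-1)/2}$.
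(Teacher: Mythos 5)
Your argument is correct: the degree and codegree counts are right, $MM^T=q^{d-1}I+\frac{q^{d-1}-1}{q-1}J$ does give second singular value exactly $q^{(d-1)/2}$, and the expander mixing lemma then yields the stated bound (in fact with constant $1$). The paper itself imports this theorem from \cite{vinh} without proof, and your spectral route via the point--hyperplane incidence graph is essentially Vinh's original argument, so there is nothing further to compare.
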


Using Theorem \ref{thm12} and the argument in \cite{shpar}, we are now ready to give a proof of Theorem \ref{thm2}. 

\paragraph{Proof of Theorem \ref{thm2}:} 
Since $\max_{(u,v, w)\in \U\times \V\times \W}|\beta_{uvw}|\le 1,$ we have 
\[|S_\chi(\T, \U, \V, \W, \alpha, \beta, f)|\le \sum_{u\in \U, v\in \V, w\in \W}\left\vert \sum_{t\in \T}\alpha_t\chi(t+f(u, v, w))\right\vert.\]
For $\lambda\in \mathbb{F}_p,$ let $N(\U, \V, \W, \lambda)$ be the number of solutions of the equation 
\[f(u, v, w)=\lambda,\]
with $(u, v, w)\in \U\times \V\times \W.$ One can check that 
\[\sum_{\lambda\in \mathbb{F}_p}N(\U, \V, \W, \lambda)=UVW,\]
and 
\[\sum_{\lambda\in \mathbb{F}_p}N(\U, \V, \W, \lambda)^2= E,\]
where $E$ is the number of tuples $(u, v, w, u', v', w')\in (\U\times \V\times \W)^2$ such that $f(u, v, w)=f(u', v', w').$

Thus we have 
\begin{align*}
|S_\chi(\T, \U, \V, \W, \alpha, \beta, f)|\le \sum_{\lambda\in \mathbb{F}_p}N(\U, \V, \W, \lambda)\left\vert \sum_{t\in \T}\alpha_t\chi(t+\lambda)\right\vert.
\end{align*}
Using the H\"{o}lder inequality, we have 
\begin{align*}
&|S_\chi(\T, \U, \V, \W, \alpha, \beta, f)|^{2n}\le \left(\sum_{\lambda\in \mathbb{F}_p}\left\vert \sum_{t\in \T}\alpha_t\chi(t+\lambda)\right\vert^{2n}\right)\cdot \left(\sum_{\lambda\in \mathbb{F}_p}N(\U, \V, \W, \lambda)^{\frac{2n}{2n-1}}\right)^{2n-1}\\
&\le \left(\sum_{\lambda\in \mathbb{F}_p}N(\U, \V, \W, \lambda)\right)^{2n-2}\cdot \left(\sum_{\lambda\in \mathbb{F}_p}N(\U, \V, \W, \lambda)^2\right)\cdot \left(\sum_{\lambda\in \mathbb{F}_p}\left\vert \sum_{t\in \T}\alpha_t\chi(t+\lambda)\right\vert^{2n}\right)\\
&= (UVW)^{2n-2}\cdot E\cdot \left(\sum_{\lambda\in \mathbb{F}_p}\left\vert \sum_{t\in \T}\alpha_t\chi(t+\lambda)\right\vert^{2n}\right).
\end{align*}
It follows from Theorem \ref{thm12} and Lemma \ref{lm1754} that if $UVW\ll p^2,$ then
\begin{align*}
&|S_\chi(\T, \U, \V, \W, \alpha, \beta, f)|\ll \\
& (UVW)^{\frac{2n-2}{2n}}\left( (UVW)^{\frac{3}{2}} + \max\{V^2W^2, V^2U^2, U^2W^2\} \right)^{\frac{1}{2n}} \cdot \begin{cases}T^{\frac{1}{2}}p^{\frac{1}{2}} ~~\mbox{if}~~ n=1\\ Tp^{\frac{1}{4n}}+T^{\frac{1}{2}}p^{\frac{1}{2n}} ~~\mbox{if}~~n \ge 2.\end{cases}\\
&\ll \left((UVW)^{1-\frac{1}{4n}}+ UVW \Omega^{\frac{1}{n}}\right)\cdot \begin{cases} T^{\frac{1}{2}}p^{\frac{1}{2}} ~~\mbox{if}~~ n=1\\ Tp^{\frac{1}{4n}}+T^{\frac{1}{2}}p^{\frac{1}{2n}} ~~\mbox{if}~~n \ge 2.\end{cases} \end{align*}
This completes the proof of the first part of Theorem  \ref{thm2}. 

Next we prove the second part of Theorem \ref{thm2}. Suppose that $UVW\gg p^2.$ Instead of Rudnev's point-plane incidence theorem (Theorem \ref{thm:rudnev}), one can follow the proof of Theorem \ref{thm12} with Vinh's point-plane incidence theorem (Theorem \ref{VinhInc}). Then we see that 
$$E\ll (UVW)^2/p+\max\{V^2W^2, V^2U^2, U^2W^2\}.$$ 
With this bound of $E,$ we have 
\[|S_\chi(\T, \U, \V, \W, \alpha, \beta, f)|\ll \left(\frac{UVW}{p^{1/2n}}+UVW \Omega^{\frac{1}{n}}\right)\cdot \begin{cases}T^{\frac{1}{2}}p^{\frac{1}{2}} ~~\mbox{if}~~ n=1\\ Tp^{\frac{1}{4n}}+T^{\frac{1}{2}}p^{\frac{1}{2n}} ~~\mbox{if}~~n \ge 2,\end{cases}\]
which completes the proof of the second part of Theorem  \ref{thm2}.
Thus the proof of Theorem \ref{thm2} is complete.  $ \square$
 
\section{Proof of Theorem \ref{thm1}}
In the proof of Theorem \ref{thm1}, we make use of the following result which can be obtained by applying Theorem \ref{thm12}.
\bigskip
\begin{theorem}\label{theorem21}
Let $f\in \F_p[x,y]$ be a quadratic polynomial that depends on each variable and that does not have the form $g(a x+b y).$ For $\U, \V \subset \mathbb{F}_p^*,$  let $E$ be the number of tuples $(u, v, u', v')\in (\U\times \V)^2$ such that $f(u, v)=f(u', v').$ Suppose that $V^2|\U-\V|\ll p^2.$ Then we have 
\[E\lesssim V|\U-\V|^{3/2}+|\U-\V|^2.\]
\end{theorem}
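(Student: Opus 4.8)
The plan is to read off this two-variable energy bound from the three-variable estimate of Theorem \ref{thm12} by a lifting/multiplicity trick whose whole purpose is to realize the difference set $\U-\V$ as the range of an artificially introduced third variable. After discarding the (irrelevant) constant term, write $f(x,y)=ax^2+bxy+cy^2+dx+ey$ and introduce the three-variable polynomial $G(x,y,z):=f(x+z,y)$, to be considered over the product $\V\times\V\times(\U-\V)$. Its three set-sizes multiply to $V\cdot V\cdot|\U-\V|=V^2|\U-\V|$, so the standing hypothesis $V^2|\U-\V|\ll p^2$ is exactly the admissibility condition needed to feed $G$ and these sets into Theorem \ref{thm12}.

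First I would show that the three-variable energy $E_3$ of $G$ dominates $V^2E$. Given any solution $(u,v,u',v')\in(\U\times\V)^2$ of $f(u,v)=f(u',v')$ and an arbitrary pair $(p,q)\in\V\times\V$, set $z:=u-p$ and $z':=u'-q$; then $z,z'\in\U-\V$ and
\[ G(p,v,z)=f(p+z,v)=f(u,v)=f(u',v')=f(q+z',v')=G(q,v',z'). \]
This produces, injectively, $E\cdot V^2$ solutions of $G(\cdot)=G(\cdot)$ with both triples in $\V\times\V\times(\U-\V)$: the coordinates $p,v,q,v'$ are visible in the output, while $u=p+z$ and $u'=q+z'$ are recovered from it. Hence $E\le E_3/V^2$. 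Applying Theorem \ref{thm12} to $G$ over the sets $\V,\V,\U-\V$ of sizes $V,V,|\U-\V|$, and using $|\U-\V|\ge V$ to evaluate the maximum, gives $E_3\ll(V^2|\U-\V|)^{3/2}+V^2|\U-\V|^2$; dividing by $V^2$ yields the claimed $E\lesssim V|\U-\V|^{3/2}+|\U-\V|^2$.

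The hard part will be checking that $G$ is a genuine three-variable quadratic not of the excluded shape $g(h(x)+k(y)+l(z))$, which is required before Theorem \ref{thm12} may be invoked. The quadratic part of $G$ equals $a(x+z)^2+b(x+z)y+cy^2$, and a short discriminant computation shows that whenever $a\neq 0$ or $b\neq 0$ the surviving mixed monomials $2a\,xz,\ b\,xy,\ b\,yz$ force $G$ out of the forbidden form, exactly in the spirit of the case analysis behind Theorem \ref{thm12}. The genuinely delicate configuration is $a=b=0$, i.e.\ $f=cy^2+dx+ey$: here the first slot of $f$ is linear, $G$ collapses to a separable polynomial, and Theorem \ref{thm12} no longer applies. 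In that case I would argue directly, using the substitution $u=v+t$ with $t\in\U-\V$ to bound $E$ by the energy of the separable function $cv^2+(d+e)v+dt$ over $\V\times(\U-\V)$, whose level sets are parabolas; since such points meet every line in at most two places, a Rudnev-type incidence count (rather than Theorem \ref{thm12}) should recover the same estimate, with any logarithmic loss absorbed by the $\lesssim$. Pinning down this degenerate case, together with the symmetric bookkeeping of which argument of $f$ carries the quadratic part, is where the real care lies.
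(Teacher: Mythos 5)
Your proposal follows the same core strategy as the paper: lift $f$ to a three-variable polynomial by translating its first argument over $\U-\V$ and apply Theorem~\ref{thm12} over $\V\times\V\times(\U-\V)$ (the paper uses $h(x,y,z)=f(x-z,y)$, you use $f(x+z,y)$; this is immaterial). Where you differ is the reduction from the two-variable energy to the three-variable one. The paper runs a dyadic decomposition of the level sets of $f$ combined with Cauchy--Schwarz, which is what produces the logarithmic loss hidden in the $\lesssim$; your observation that each level set of $f$ of multiplicity $m_t$ lifts injectively to a level set of $G$ of multiplicity at least $V m_t$, so that $E\le E_3/V^2$ directly, is cleaner and in fact yields the bound with $\ll$ rather than $\lesssim$. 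Your arithmetic ($|\U-\V|\ge V$, so the $\max$ term in Theorem~\ref{thm12} is $V^2|\U-\V|^2$) is also correct. The one genuinely incomplete point is the degenerate case you yourself flag: when $f$ has no $x^2$ and no $xy$ term (e.g.\ $f=cy^2+dx+ey$ with $c,d\ne 0$, which is quadratic, depends on both variables, and is \emph{not} of the form $g(ax+by)$, so it is admitted by the hypotheses), the lift collapses to the excluded separable shape $h(x)+k(y)+l(z)$ and Theorem~\ref{thm12} does not apply. You deserve credit for catching this: the paper's proof asserts ``by a direct computation'' that $h(x,y,z)$ always satisfies the conditions of Theorem~\ref{thm12}, and that assertion is false precisely in this case, so the published argument has the same hole. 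But your proposed repair (``a Rudnev-type incidence count should recover the same estimate'') is only a sketch, and it is not obvious that it closes at the claimed strength --- the quantity to control becomes an additive energy between a translated parabola and the arbitrary set $\U-\V$, and the naive bounds ($V^2|\U-\V|$, or Cauchy--Schwarz against the additive energies of the two sets) do not immediately fall below $V|\U-\V|^{3/2}+|\U-\V|^2$ in all ranges. So: main case correct and slightly sharper than the paper; degenerate case correctly identified but left open, in your write-up and in the paper alike.
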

\begin{proof}
For any $t\in f(\U, \V),$ let $m_t$ be the number of pairs $(u, v)\in \U\times \V$ such that $f(u, v)=t.$ It is clear that $m_t \le UV$ for all $t\in f(\U, \V).$ It follows that
\begin{equation}\label{logeq} E=\sum_{t\in f(\U, \V)}m_{t}^2= \sum_{j}\sum_{t\in f(\U, \V),\\  2^j\le m_{t}<2^{j+1}}m_{t}^2\ll \sum_{j=0}^{\log(UV)}2^{2j+2}k_{2^j},\end{equation}
where $k_{2^j}$ denotes the cardinality of the set $D_j:=\{t\in  f(\U, \V) \colon m_t\ge 2^j\}.$
We now bound $k_{2^j}$ as follows. 

Let $h(x, y, z)=f(x-z, y).$ Since $f(x, y)$ is not of the form $g(a x+b y),$ by a direct computation, we have $h(x, y, z)$ satisfies the conditions of Theorem \ref{thm12}. We now consider the following equation 
\begin{equation}\label{eq12x} h(x, y, z)=t,\end{equation}
where $x\in \V, z\in \V-\U, y\in \V, t\in D_j\subset f(\U, \V).$ Let $N(h)$ be the number of solutions of this equation. It is easy to see that $N(h)\ge 2^jk_{2^j}V.$ By Cauchy-Schwarz inequality, we have 
\begin{align*}
N(h)&\ll k_{2^j}^{1/2}\left|\{(x, y, z, x', y', z')\in \left(\V\times \V \times (\V-\U)\right)^2\colon h(x, y, z)=h(x', y', z')\}\right|^{1/2}\\
&\ll k_{2^j}^{1/2}\left(V^{3/2}|\U-\V|^{3/4}+|\U-\V|V\right),
\end{align*}
where the second inequality follows from Theorem \ref{thm12} with the condition $V^2|\U-\V|\ll p^2.$ Putting the lower bound and the upper bound of $N(h)$ together we get
\[2^j k_{2^j}V\ll k_{2^j}^{1/2}\left(V^{3/2}|\U-\V|^{3/4}+|\U-\V|V\right).\]
This gives us 
\[k_{2^j}\ll \frac{V|\U-\V|^{3/2}+|\U-\V|^2}{2^{2j}}.\]
Combining this estimate with the inequality \eqref{logeq}, we see that
$$ E \ll \left(V|\U-\V|^{3/2}+|\U-\V|^2\right) \sum_{j=0}^{\log({UV})} 1 \lesssim V|\U-\V|^{3/2}+|\U-\V|^2.$$
This concludes the proof of Theorem \ref{theorem21}. 
\end{proof}
\paragraph{Proof of Theorem \ref{thm1}:} The proof of Theorem \ref{thm1} is similar to Theorem \ref{thm2} except that we use Theorem \ref{theorem21} instead of Theorem \ref{thm12}. For the completeness, we will include the detailed proof here. 

Since $\max_{(u,v)\in \U\times \V}|\beta_{uv}|\le 1,$ we have 
\[|S_\chi(\T, \U, \V, \alpha, \beta, f)|\le \sum_{u\in \U, v\in \V, w\in \W}\left\vert \sum_{t\in \T}\alpha_t\chi(t+f(u, v))\right\vert.\]
For $\lambda\in \mathbb{F}_p,$ let $N(\U, \V, \lambda)$ be the number of solutions of the equation 
\[f(u, v)=\lambda,\]
with $(u, v)\in \U\times \V.$ It is easy to see
\[\sum_{\lambda\in \mathbb{F}_p}N(\U, \V, \lambda)=UV, \quad \mbox{and}\quad \sum_{\lambda\in \mathbb{F}_p}N(\U, \V, \lambda)^2= E,\]
where $E$ is defined as in Theorem \ref{theorem21}. Thus we have 
\begin{align*}
|S_\chi(\T, \U, \V, \alpha, \beta, f)|\le \sum_{\lambda\in \mathbb{F}_p}N(\U, \V, \lambda)\left\vert \sum_{t\in \T}\alpha_t\chi(t+\lambda)\right\vert.
\end{align*}
Using the H\"{o}lder inequality, we have 
\begin{align*}
&|S_\chi(\T, \U, \V, \alpha, \beta, f)|^{2n}\le \left(\sum_{\lambda\in \mathbb{F}_p}\left\vert \sum_{t\in \T}\alpha_t\chi(t+\lambda)\right\vert^{2n}\right)\cdot \left(\sum_{\lambda\in \mathbb{F}_p}N(\U, \V, \lambda)^{\frac{2n}{2n-1}}\right)^{2n-1}\\
&\ll \left(\sum_{\lambda\in \mathbb{F}_p}N(\U, \V, \lambda)\right)^{2n-2}\cdot \left(\sum_{\lambda\in \mathbb{F}_p}N(\U, \V, \lambda)^2\right)\cdot \left(\sum_{\lambda\in \mathbb{F}_p}\left\vert \sum_{t\in \T}\alpha_t\chi(t+\lambda)\right\vert^{2n}\right)\\
&= (UV)^{2n-2}\cdot E\cdot \left(\sum_{\lambda\in \mathbb{F}_p}\left\vert \sum_{t\in \T}\alpha_t\chi(t+\lambda)\right\vert^{2n}\right).
\end{align*}
By Theorem \ref{theorem21} and Lemma \ref{lm1754}, we see that if $V^2|\U-\V|\sim kUV^2\ll p^2,$ then
\begin{align*}
&|S_\chi(\T, \U, \V, \alpha, \beta, f)|\lesssim
&\left( k^{\frac{3}{4n}}  \cdot \frac{UV}{U^{1/4n}V^{1/2n}} +k^{\frac{1}{n}}\cdot \frac{UV}{V^{1/n}}\right)\cdot \begin{cases}T^{1/2}p^{1/2} ~~\mbox{if}~~n=1\\
Tp^{1/4n}+T^{1/2}p^{1/2n} ~~\mbox{if} ~~n\ge 2.\end{cases}
\end{align*}
This proves the first part of Theorem \ref{thm1}. 

To prove the second part of  Theorem \ref{thm1}, assume that $V^2|\U-\V|\gg p^2.$ We can follow the proof of Theorem \ref{theorem21} with Vinh's point-plane incidence theorem (Theorem \ref{VinhInc}) to obtain $E\ll V^2|\U-\V|^2/p+|\U-\V|^2.$ With this bound of $E,$ we have 
\[|S_\chi(\T, \U, \V, \alpha, \beta, f)|\lesssim \left(k^{1/n}\cdot \frac{UVW}{p^{1/2n}}+k^{1/n}\cdot \frac{UVW}{V^{1/n}}\right)\cdot \begin{cases}T^{1/2}p^{1/2} ~~\mbox{if}~~n=1\\
Tp^{1/4n}+T^{1/2}p^{1/2n} ~~\mbox{if} ~~n\ge 2,\end{cases}\]
which completes the proof of the second part of Theorem \ref{thm1}. $\square$


\begin{thebibliography}{00}

\bibitem{bo}
B. Bollobas, 
{\em Modern Graph Theory}, 
Springer-Verlag, 1998.
\bibitem{4}
 J. Bourgain, \textit{On exponential sums in finite fields,} Bolyai Soc. Math. Stud.,
21, J\'{a}nos Bolyai Math. Soc., Budapest, 2010, 219--24.
\bibitem{5}
 J. Bourgain and M. Z. Garaev, \textit{On a variant of sum-product estimates and
explicit exponential sum bounds in prime fields}, Math. Proc. Cambridge
Phil. Soc., 146 (2009), 1--2.
\bibitem{7}
J. Bourgain and A. Glibichuk, \textit{Exponential sum estimates over a subgroup
in an arbitrary finite field}, J. D’Analyse Math., 115 (2011), 51--70.
\bibitem{croot}
E. Croot, O. Sisask, A probabilistic technique for finding almost– periods of convolutions, Geom. Funct. Anal., 20:6 (2010), 1367--1396.

\bibitem{chang}
M.–C. Chang, On a question of Davenport and Lewis and new character sum bounds in finite fields, Duke Math. J. \textbf{145}(3)(2008), 409–-442.
\bibitem{8}
J. B. Friedlander and H. Iwaniec, \textit{Estimates for character sums}, Proc.
Amer. Math. Soc., 119 (1993), 365--372.

\bibitem{ha}
B. Hanson, \textit{Estimates for characters sums with various convolutions},
Acta Arithmetica, 179, 133--146.
\bibitem{alex}
A. Iosevich, B. Murphy, J. Pakianathan, \textit{The square root law and structure of finite rings}, Moscow Journal of Combinatorics and Number Theory (2017), Vol 7, Iss. 1, 38--72.
\bibitem{k1}
A. A. Karatsuba, \textit{The distribution of values of Dirichlet characters
on additive sequences}, Soviet Math. Dokl., 44:1 (1992), 145--148.
\bibitem{k2}
A. A. Karatsuba, \textit{Distribution of power residues and non-residues in
additive sequences,} Soviet Math. Dokl., 11 (1970), 235--236.
\bibitem{k3}
A. A. Karatsuba, \textit{Arithmetic problems in the theory of Dirichlet char-
acters}, Russ. Math. Surv., 63:4 (2008), 43--92.
\bibitem{m1}
S. Macourt, I. D. Shkredov, I. Shparlinski, \textit{Multiplicative energy of shifted subgroups and bounds on exponential sums with trinomials in finite fields}, arXiv:1701.06192 (2017).
\bibitem{m2}
S. Macourt, \textit{Incidence Results and Bounds Of Trilinear and Quadrilinear Exponential Sums}, SIAM Journal on Discrete Mathematics, \textbf{32}(2) (2018): 815--825.
\bibitem{thang}
T. Pham, F. De Zeeuw, L. A. Vinh, \textit{Three-variable expanding polynomials and higher-dimensional distinct distances}, Combinatorica (2017): 1--16.
\bibitem{R}
M. Rudnev,
{\em On the number of incidences between points and planes in three dimensions}, 
Combinatorica  38  (2018),  no. 1, 219-254.
\bibitem{shpar}
I. Shkredov and I. Shparlinski, 
{\em On some multiple character sums}, Mathematika, Vol 63, (2017), 553--560.
\bibitem{vol}
D. Shkredov, A.  Volostnov,  \textit{Sums of multiplicative characters with additive convolutions},  Proceedings of the Steklov Institute of Mathematics \textbf{296}(1) (2017): 256--269.
\bibitem{s-0}
I. Shparlinski, \textit{On sums of Kloosterman and Gauss sums}, accepted in Trans. Amer. Math. Soc., 2018. 
\bibitem{vinh}
L. A. Vinh,\textit{ The Szemer\'{e}di--Trotter type theorem and the sum-product estimate in finite fields}, European Journal of Combinatorics, \textbf{32}(8) (2011): 1177--1181.

\bibitem{frank}
F. de Zeeuw: \textit{A short proof of Rudnev’s point-plane incidence bound}, arXiv: 1612.02719 (2016).





\end{thebibliography}
\end{document}